\def\al{\alpha}
\def\ep{\varepsilon}
\def\si{\sigma}
\def\vph{\varphi}
\def\Om{\Omega}
\def\cV{{\mathcal V}}
\def\ang#1{{\langle #1 \rangle}}
\def\P{{\mathbb P}}
\def\Z{{\mathbb Z}}
\def\N{{\mathbb N}}
\def\m{{\mathfrak m}}
\def\dim{\operatorname{dim}}
\def\Ext{\operatorname{Ext}}
\def\H{\operatorname{H}}
\def\Hom{\operatorname{Hom}}
\def\grmod{\mathsf{grmod}\,}
\def\GrMod{\mathsf{GrMod}\,}
\def\CM{\mathsf{CM^{\Z}}}
\def\uCM{\mathsf{\underline{CM}^{\Z}}}
\def\Db{\mathsf{D^b}}
\def\mod{\mathsf{mod}\,}
\def\Mod{\mathsf{Mod}\,}
\def\rnum#1{\expandafter{\romannumeral #1}}
\def\Rnum#1{\uppercase\expandafter{\romannumeral #1}}
\theoremstyle{plain} 
\newtheorem{thm}{Theorem}[section]
\newtheorem{cor}[thm]{Corollary}
\newtheorem*{thm*}{Theorem}
\newtheorem*{cor*}{Corollary}
\newtheorem{lem}[thm]{Lemma}
\newtheorem{prop}[thm]{Proposition}
\theoremstyle{definition}
\newtheorem{dfn}[thm]{Definition}
\newtheorem{ex}[thm]{Example}
\newtheorem{conj}[thm]{Conjecture}
\theoremstyle{remark}
\newtheorem{rem}[thm]{Remark}
\newcommand{\thmref}[1]{Theorem~\ref{#1}}
\newcommand{\lemref}[1]{Lemma~\ref{#1}}
\newcommand{\corref}[1]{Corollary~\ref{#1}}
\newcommand{\propref}[1]{Proposition~\ref{#1}}
\newcommand{\conjref}[1]{Conjecture~\ref{#1}}
\numberwithin{equation}{section}
\begin{document}

\title{On Kn\"orrer periodicity for quadric hypersurfaces in skew projective spaces}
\author{Kenta Ueyama}
\address{
Department of Mathematics, 
Faculty of Education,
Hirosaki University, 
1 Bunkyocho, Hirosaki, Aomori 036-8560, Japan}
\email{k-ueyama@hirosaki-u.ac.jp} 
\thanks{The author was supported by JSPS Grant-in-Aid for Early-Career Scientists 18K13381.}

\subjclass[2010]{16G50, 16S38, 18E30, 14A22}
\keywords{Kn\"orrer periodicity, stable category, noncommutative quadric hypersurface, skew polynomial algebra, point scheme}

\begin{abstract}
We study the structure of the stable category $\mathsf{\underline{CM}}^{\mathbb Z}(S/(f))$ of graded maximal Cohen-Macaulay module over $S/(f)$
where $S$ is a graded ($\pm 1$)-skew polynomial algebra in $n$ variables of degree 1, and $f =x_1^2 + \cdots +x_n^2$.
If $S$ is commutative, then the structure of $\mathsf{\underline{CM}}^{\mathbb Z}(S/(f))$ is well-known by Kn\"orrer's periodicity theorem.
In this paper, we prove  that if $n\leq 5$, then the structure of $\mathsf{\underline{CM}}^{\mathbb Z}(S/(f))$ is determined by
the number of irreducible components of the point scheme of $S$ which are isomorphic to ${\mathbb P}^1$.
\end{abstract}

\maketitle

\section{Introduction}

Throughout this paper, we fix an algebraically closed field $k$ of characteristic 0.

Kn\"orrer's periodicity theorem (\cite[Theorem 3.1]{Kn}) plays an essential role in Cohen-Macaulay representation theory of Gorenstein rings.
As a special case of Kn\"orrer's periodicity theorem, the following result is well-known (see also \cite{BEH}).
\begin{thm}\label{thm.Kn}
Let $S=k[x_1, \dots, x_n]$ be a graded polynomial algebra generated in degree 1, and $f=x_1^2 + x_2^2 + \cdots + x_n^2$.
Let $\uCM(S/(f))$ denote the stable category of graded maximal Cohen-Macaulay module over $S/(f)$.
\begin{enumerate}
\item If $n$ is odd, then $\uCM(S/(f)) \cong \uCM(k[x]/(x^2)) \cong \Db(\mod k)$.
\item If $n$ is even, then $\uCM(S/(f)) \cong \uCM(k[x,y]/(x^2+y^2)) \cong \Db(\mod k^2)$.
\end{enumerate}
\end{thm}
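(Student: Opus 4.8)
The plan is to realize both statements as instances of Kn\"orrer's periodicity theorem, reducing the number of variables in steps of two until one reaches a base case that can be computed directly. First I would replace modules by matrix factorizations: by the equivalence of Buchweitz and Eisenbud there is a triangle equivalence $\uCM(S/(f))\cong\operatorname{MF}^{\Z}(S,f)$, where $\operatorname{MF}^{\Z}(S,f)$ denotes the homotopy category of graded matrix factorizations of $f$, that is, pairs $(A,B)$ of homogeneous maps of graded free $S$-modules with $AB=BA=f\cdot\id$. Kn\"orrer's construction is cleanest in this language.

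Since $k$ is algebraically closed of characteristic $0$, I may fix $i$ with $i^2=-1$ and apply the homogeneous linear change of variables $u=x_{n-1}+ix_n$, $v=x_{n-1}-ix_n$, under which $x_{n-1}^2+x_n^2=uv$. Writing $S_m=k[x_1,\dots,x_m]$ and $f_m=x_1^2+\cdots+x_m^2$, this gives $f_n=f_{n-2}+uv$, so Kn\"orrer's periodicity theorem \cite[Theorem 3.1]{Kn} supplies a triangle equivalence $\uCM(S_n/(f_n))\cong\uCM(S_{n-2}/(f_{n-2}))$. Iterating this equivalence reduces the odd case to $\uCM(k[x]/(x^2))$ and the even case to $\uCM(k[x,y]/(x^2+y^2))$; these are the first equivalences asserted in (1) and (2).

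It then remains to evaluate the two base cases. For $n=1$ the algebra $k[x]/(x^2)$ is graded self-injective with a unique non-projective graded indecomposable, the simple module $k$, whose syzygy is merely a grading shift, $\Omega k\cong k(-1)$. Hence in the stable category the suspension acts on $k$ by an internal shift, $k$ has graded stable endomorphism ring $k$ with no higher self-extensions, and one obtains $\uCM(k[x]/(x^2))\cong\Db(\mod k)$. For $n=2$ the change of variables identifies $k[x,y]/(x^2+y^2)$ with $k[u,v]/(uv)$, whose stable category is generated by the two branch modules $(k[u,v]/(uv))/(u)$ and $(k[u,v]/(uv))/(v)$; here one must be a little careful, since the syzygy interchanges the two branches up to a shift rather than preserving them. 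The cleanest way to conclude is to note that $\Proj(k[u,v]/(uv))$ consists of two reduced points, so the homogeneous coordinate ring has Gorenstein parameter $0$, and Orlov's theorem then yields $\uCM(k[u,v]/(uv))\cong\Db(\qgr k[u,v]/(uv))\cong\Db(\mod k^2)$.

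The main difficulty is the careful handling of the internal $\Z$-grading. Kn\"orrer's theorem is classically formulated in the complete local setting, so I would need to verify that the equivalence lifts to graded matrix factorizations with the correct degree shifts---this is possible precisely because the adjoined variables $u,v$ have degree $1$ and $\deg(uv)=2=\deg f$, so that $\Omega^2$ becomes the internal shift by $-\deg f$. The analogous care is needed in the base cases to match the syzygy functor with the homological shift, and, for $n=2$, to see that the interchange of branches is exactly what produces the two orthogonal factors of $\Db(\mod k^2)$ after unrolling the grading.
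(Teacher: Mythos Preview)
Your approach is correct and is, in essence, the classical argument behind the citation: the paper does not give its own proof of \thmref{thm.Kn}, but simply records it as a special case of Kn\"orrer's periodicity \cite[Theorem~3.1]{Kn} (see also \cite{BEH}). Your reduction via graded matrix factorizations and the change of variables $x_{n-1}^2+x_n^2=uv$ is exactly the mechanism underlying that citation, and your base-case computations for $n=1,2$ are fine; the use of Orlov's theorem for $n=2$ is heavier machinery than strictly needed, but it works since the Gorenstein parameter of $k[u,v]/(uv)$ is indeed $0$.

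That said, the paper \emph{does} recover \thmref{thm.Kn} by an entirely different route, as noted after \thmref{thm.gst}. Rather than iterate Kn\"orrer's two-variable reduction, the paper invokes the Smith--Van den Bergh equivalence $\uCM(A)\cong\Db(\mod C(A))$ (\thmref{thm.sv}) and computes $C(A)$ directly. In the commutative case all $\ep_{ij}=1$, so \lemref{lem.c}(3) and \propref{prop.pc}(1) give $C(A)\cong C_+$, the standard Clifford algebra on $n-1$ anticommuting square roots of $1$; its Wedderburn decomposition (\ref{eq.ci}) then yields $\Db(\mod k)$ or $\Db(\mod k^2)$ according to the parity of $n$. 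This approach bypasses periodicity and the intermediate equivalences $\uCM(S_n/(f_n))\cong\uCM(S_{n-2}/(f_{n-2}))$ altogether: it computes the target in one step, at the cost of not exhibiting those intermediate equivalences (which are part of the statement as written). Your argument, by contrast, proves the full chain of equivalences but must contend with transporting Kn\"orrer's complete-local statement to the $\Z$-graded setting---a point you rightly flag as the main technical issue.
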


The purpose of this paper is to study a ``($\pm 1$)-skew'' version of \thmref{thm.Kn}.

\begin{dfn}
Let $n \in \N^+$.
\begin{enumerate}
\item We say that $S$ is a graded skew polynomial algebra if
\[S = k\ang{x_1, \dots, x_n}/(x_ix_j-\al_{ij}x_jx_i)_{1\leq i, j\leq n}\]
where $\al_{ii} = 1$ for every $1\leq i\leq n$, $\al_{ij}\al_{ji}=1$ for every $1\leq i, j\leq n$, and $\deg x_i = 1$ for every $1\leq i \leq n$.
\item We say that $S$ is a graded ($\pm 1$)-skew polynomial algebra if
\[S = k\ang{x_1, \dots, x_n}/(x_ix_j-\ep_{ij}x_jx_i)_{1\leq i, j\leq n}\]
is a graded skew polynomial algebra such that $\ep_{ij}$ equals either $1$ or $-1$ for every $1\leq i, j\leq n, i\neq j$.
\end{enumerate}
\end{dfn}
Clearly, a graded polynomial algebra $k[x_1, \dots, x_n]$ generated in degree 1 is an example of a graded ($\pm 1$)-skew polynomial algebra.
Consider the element
\[f= x_1^2 + x_2^2 + \cdots + x_n^2\]
of a graded skew polynomial algebra $S= k\ang{x_1, \dots, x_n}/(x_ix_j-\al_{ij}x_jx_i)$.
Then we notice that $f$ is normal if and only if $f$ is central if and only if $S$ is a ($\pm 1$)-skew polynomial algebra.

Let $S=k\ang{x_1, \dots, x_n}/(x_ix_j-\ep_{ij}x_jx_i)$ be a graded ($\pm 1$)-skew polynomial algebra so that
$f= x_1^2 + x_2^2 + \cdots + x_n^2 \in S$
is a homogeneous regular central element. Let $A$ be the graded quotient algebra $S/(f)$.
Since $S$ is a noetherian AS-regular algebra of dimension $n$ and $A$ is a noetherian AS-Gorenstein algebra of dimension $n-1$,
$A$ is regarded as a homogeneous coordinate ring of a quadric hypersurface in a ($\pm 1$)-skew projective space.
The main focus of this paper is to determine the structure of $\uCM(A)$ from a geometric data associated to $S$ called the point scheme of $S$.
Based on our experiments, we propose the following conjecture.

\begin{conj} \label{conj.main}
Let $S=k\ang{x_1, \dots, x_n}/(x_ix_j-\ep_{ij}x_jx_i)$ be a graded ($\pm 1$)-skew polynomial algebra, $f= x_1^2 + x_2^2 + \cdots + x_n^2 \in S$, and $A=S/(f)$.
Let $\ell$ be the number of irreducible components of the point scheme of $S$ which are isomorphic to $\P^1$.
\begin{enumerate}
\item If $n$ is odd, then 
\[
\binom{2m-1}{2} < \ell \leq \binom{2m+1}{2} \Longleftrightarrow \uCM(A) \cong \Db(\mod k^{2^{2m}})
\]
for $m \in \N$ where we consider $\binom{-1}{2}=-\infty, \binom{1}{2}=0$.
\item If $n$ is even, then
\[
\binom{2m}{2} < \ell \leq \binom{2m+2}{2} \Longleftrightarrow \uCM(A) \cong \Db(\mod k^{2^{2m+1}})
\]
for $m \in \N$ where we consider $\binom{0}{2}=-\infty$.
\end{enumerate}
\end{conj}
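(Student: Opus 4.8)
The plan is to replace $\uCM(A)$ by the module category of a finite-dimensional ``Clifford'' algebra, compute the point scheme by hand, and then match the two resulting invariants. Since $f$ is a regular central element, $A=S/(f)$ is a noncommutative quadric hypersurface, and by the structure theory of such rings (in the spirit of Buchweitz--Orlov, Smith--Van den Bergh, and Mori--Ueyama) $\uCM(A)$ should be triangle-equivalent to $\Db(\mod C)$ for a finite-dimensional algebra $C$ assembled from the linear matrix factorizations of $f$ over $S$. I would compute $C$ explicitly: requiring $\Phi=\sum_i x_ie_i$ to satisfy $\Phi^2=f$ forces $e_i^2=1$ and $e_ie_j=-\ep_{ij}e_je_i$, so
\[
C=k\ang{e_1,\dots,e_n}/(e_i^2-1,\ e_ie_j+\ep_{ij}e_je_i),
\]
a twisted group algebra of $(\Z/2)^n$. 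Because $\fchar k=0$ and the form is nondegenerate, $C$ is semisimple, whence $\Db(\mod C)\cong\Db(\mod k^N)$ with $N$ the number of simple factors; the grading $\deg x_i=1$ singles out the even part $C_0$, which must reproduce $N=1$ for odd $n$ and $N=2$ for even $n$ upon specializing to the commutative case of \thmref{thm.Kn}. The classification of twisted elementary abelian $2$-group algebras then yields $N=2^{d}$, where $d=\dim_{\mathbb{F}_2}\operatorname{rad}(\beta|_H)$, $H=\{v\in\mathbb{F}_2^n:\sum_iv_i=0\}$, and $\beta$ is the alternating form over $\mathbb{F}_2$ recording which pairs $e_i,e_j$ anticommute, i.e.\ the adjacency form of the commutation graph $\bar G$ (edges where $\ep_{ij}=+1$).

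Next I would compute the point scheme. For a $(\pm1)$-skew polynomial algebra it is a union of coordinate linear subspaces, and solving the multilinearised relations $p_iq_j=\ep_{ij}p_jq_i$ against the (diagonal) point-scheme automorphism shows that $\P(T)$ lies in the point scheme precisely when $\ep|_T$ is a coboundary, equivalently when the induced subgraph $G[T]$ of the \emph{anticommutation} graph $G$ (edges where $\ep_{ij}=-1$) is complete bipartite. The irreducible components are the maximal such $T$, and $\P(T)\cong\P^1$ exactly when $|T|=2$; hence $\ell$ equals the number of maximal complete-bipartite-inducing pairs. Both $\ell$ and $d$ are invariant under permuting the variables and under Seidel switching of $G$ (which changes neither the isomorphism type of $C$ nor the underlying scheme), and this invariance is the key finiteness input.

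Finally I would reconcile the two invariants. In unified form the assertion becomes
\[
\binom{d-1}{2}<\ell\le\binom{d+1}{2},
\]
and since $\dim H=n-1$ with $\beta|_H$ alternating one has $d\equiv n-1\pmod2$; as the intervals above partition the possible values of $\ell$ for each fixed parity of $d$, this recovers exactly the two cases of \conjref{conj.main} and shows that $\ell$ determines $N$. The content is the double inequality: the upper bound $\ell\le\binom{d+1}{2}$ is extremal when $G$ carries no complete-bipartite induced subgraph on three or more vertices (then every pair is a component, $\ell=\binom{n}{2}$ and $d=n-1$), while the lower bound must track how each additional ``resolvable'' block of $G$ pushes $\ell$ below a binomial threshold. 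For $n\le5$ I would close the argument by classification: up to permutation and switching there are only finitely many graphs on $\le5$ vertices, so I would list a representative of each switching class, tabulate $(\ell,d)$, and verify the inequality directly --- for instance on $n=3$ the only outcomes are $(\ell,d)=(0,0)$ and $(3,2)$, matching the intervals $(-\infty,0]$ and $(0,3]$.

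The hard part is precisely this last reconciliation in general: $\ell$ is an extremal statistic of maximal complete-bipartite induced subgraphs of $G$, whereas $d$ is an $\mathbb{F}_2$-rank attached to $\bar G$, and a priori these two graph invariants are unrelated. Proving the tight bounds for every sign pattern and every $n$ is what lies beyond the finite verification, and is exactly why the statement remains a conjecture outside the range $n\le5$.
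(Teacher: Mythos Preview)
The statement is a conjecture; the paper does not prove it in general but only verifies it for $n\le 5$ by explicit enumeration. Your proposal correctly recognises this and proposes the same finite verification for small $n$, so at the level of what is actually established the two agree.

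Where you differ is in the packaging. The paper computes $C(A)=A^![w^{-1}]_0$ via Koszul duality and then, for each of the finitely many permutation-classes of point schemes, identifies $C(A)$ with a product of matrix algebras by direct calculation (its Lemmas on the algebras $C_{\rm i},\dots,C_{\rm vii}$). You instead pass through the full Clifford-type algebra $C$ (whose even part is indeed isomorphic to the paper's $C(A)$), interpret the number of simple factors as $2^d$ with $d$ the radical dimension of the restricted commutator form over $\mathbb{F}_2$, and read the point scheme combinatorially as the complex of maximal coboundary subsets of the sign graph. This recasts the conjecture as the purely graph-theoretic inequality $\binom{d-1}{2}<\ell\le\binom{d+1}{2}$, and your observation that both $d$ and $\ell$ are invariant under Seidel switching is genuinely useful: the paper's ``permutation isomorphisms'' that relocate the distinguished variable $x_n$ are exactly Seidel switches on the graph with edges $\ep_{ni}\ep_{ij}\ep_{jn}=-1$, so your switching classes and the paper's case list coincide. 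Your framework isolates cleanly what would have to be proved for general $n$; the paper's hands-on computation has the compensating virtue of being entirely self-contained for $n\le 5$ without appealing to the representation theory of twisted group algebras.

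One caution: your passage from $\uCM(A)$ to $\Db(\mod C_0)$ via the matrix-factorisation ansatz $\Phi^2=f$ is asserted rather than proved in your sketch. In the paper this step is supplied by the Smith--Van den Bergh equivalence, and you should either cite that result or check directly that your even Clifford algebra $C_0$ agrees with $A^![w^{-1}]_0$ (it does, under $t_i\mapsto e_ie_n$).
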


We prove the following result.

\begin{thm}[{\thmref{thm.main}}]
Let $S=k\ang{x_1, \dots, x_n}/(x_ix_j-\ep_{ij}x_jx_i)$ be a graded ($\pm 1$)-skew polynomial algebra, $f= x_1^2 + x_2^2 + \cdots + x_n^2 \in S$, and $A=S/(f)$.
Assume that $n\leq 5$.
Let $\ell$ be the number of irreducible components of the point scheme of $S$ which are isomorphic to $\P^1$.
\begin{enumerate}
\item If $n$ is odd, then $\ell\leq 10$ and
\begin{align*}
\ell =0 &\Longleftrightarrow \uCM(A) \cong \Db(\mod k),\\
0< \ell \leq 3 &\Longleftrightarrow \uCM(A) \cong \Db(\mod k^4),\\
3< \ell \leq 10 &\Longleftrightarrow \uCM(A) \cong \Db(\mod k^{16}).
\end{align*}
\item If $n$ is even, then $\ell\leq 6$ and
\begin{align*}
0\leq \ell \leq 1 &\Longleftrightarrow \uCM(A) \cong \Db(\mod k^2),\\
1< \ell \leq 6 &\Longleftrightarrow \uCM(A) \cong \Db(\mod k^8).
\end{align*}
\end{enumerate}
\end{thm}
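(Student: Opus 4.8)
The plan is to reduce the computation of $\uCM(A)$ to a question about a finite-dimensional \emph{semisimple} algebra, and ultimately to a finite combinatorial check. The key structural input is the theory of noncommutative quadric hypersurfaces (a noncommutative analogue of the Clifford-algebra picture underlying \thmref{thm.Kn}): since $S$ is AS-regular and $f$ is a regular central element, one has an equivalence $\uCM(A)\cong\Db(\mod C^0)$, where $C^0$ is the even part of the Clifford algebra $C$ attached to $(S,f)$. Concretely, $C$ is generated by $e_1,\dots,e_n$ with $e_i^2=1$ and $e_ie_j=-\ep_{ij}e_je_i$ for $i\neq j$ (so that the commutative case $\ep_{ij}=1$ recovers the usual anticommuting generators), and $C^0=\ang{e_ie_j}$. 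Because $\fchar k=0$ and $k$ is algebraically closed, $C^0$ is semisimple, hence $C^0\cong\prod_{i=1}^{N}M_{d_i}(k)$ and $\uCM(A)\cong\Db(\mod k^{N})$. Everything therefore reduces to computing the number $N$ of simple $C^0$-modules.

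Next I would identify $C^0$ as a twisted group algebra of the even subgroup $G^0\cong(\Z/2)^{\,n-1}$ of $(\Z/2)^{n}$, whose Morita type is governed by the alternating $\mathbb{F}_2$-bilinear form $B$ recording which generators commute in $C$. Explicitly $B$ is the adjacency matrix over $\mathbb{F}_2$ of the \emph{commuting graph} $\Ga$ (an edge $ij$ whenever $\ep_{ij}=1$). The standard structure theory then yields $N=2^{r_0}$, where $r_0=\dim_{\mathbb{F}_2}\operatorname{rad}(B|_{G^0})$ is the corank of the restricted form and every simple has dimension $2^{(n-1-r_0)/2}$. In particular $N$ is a power of $2$, equal to $4^{m}$ for $n$ odd and $2\cdot4^{m}$ for $n$ even, which already matches the list of possible answers.

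The third step is a purely combinatorial description of the point scheme. Linearising the relations $x_ix_j=\ep_{ij}x_jx_i$, a point $[p]$ lies on the scheme exactly when, writing $J=\operatorname{supp}(p)$, the system $r_\ell=\ep_{k\ell}r_k$ ($k,\ell\in J$) admits a nonzero solution; this holds iff $\ep_{k\ell}=\phi_k\phi_\ell$ for some sign function $\phi$ on $J$, i.e. iff the anticommuting graph $\bar\Ga|_J$ is complete bipartite (equivalently $\Ga|_J$ is a disjoint union of at most two cliques). Calling such $J$ \emph{admissible}, the point scheme is $\bigcup_{J}\P(V_J)$ over maximal admissible $J$ (with $V_J=\operatorname{span}\{x_j:j\in J\}$), its irreducible components are precisely these maximal admissible sets, and $\ell$ counts those of size $2$. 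A pair $\{i,j\}$ is such a component iff every triple $\{i,j,k\}$ fails admissibility, i.e. $s_{ij}+s_{ik}+s_{jk}=1$ in $\mathbb{F}_2$ (where $\ep_{ij}=(-1)^{s_{ij}}$). Thus both $r_0$ and $\ell$ are explicit functions of the sign matrix.

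Finally, for $n\le 5$ I would perform the matching. The defining data is encoded by the commuting graph $\Ga$, so up to relabelling the variables there are only finitely many cases; for each I would read off $r_0$ (hence $N$) from $B|_{G^0}$ and $\ell$ from the maximal admissible pairs, and verify the stated ranges. The extreme cases are transparent: the commutative algebra ($\Ga=K_n$) has a single maximal admissible set of size $n$, so $\ell=0$ with $N\in\{1,2\}$, while the totally anticommuting algebra ($\Ga$ edgeless) has no admissible triple, giving $\ell=\binom{n}{2}$ together with $B=0$, $r_0=n-1$, $N=2^{\,n-1}$; this is how the thresholds $\binom{3}{2}=3,\ \binom{5}{2}=10$ (odd) and $\binom{2}{2}=1,\ \binom{4}{2}=6$ (even) arise from maximal anticommuting cliques. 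The main obstacle is exactly this last step: $r_0$ and $\ell$ are both determined by $(\ep_{ij})$ but neither is a function of the other, and one must show that as $\Ga$ ranges over all graphs on at most five vertices the pair $(\ell,r_0)$ always lands in the claimed band—i.e. that $\ell$ crossing a binomial threshold forces the corank $r_0$ to increase by $2$. Pinning down this interplay between the $\mathbb{F}_2$-rank of the commuting-graph form and the combinatorics of maximal admissible pairs is where the real work lies.
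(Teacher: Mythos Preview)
Your proposal is correct and shares the paper's core reduction: both invoke the Smith--Van den Bergh equivalence $\uCM(A)\cong\Db(\mod C(A))$, and your even Clifford algebra $C^0$ is exactly the paper's $C(A)=A^![w^{-1}]_0$ (Lemma~3.1 presents it with generators $t_i=x_nx_iw^{-1}$ and relations $t_it_j+\ep_{ni}\ep_{ij}\ep_{jn}t_jt_i=0$, $t_i^2=1$). The paper likewise reduces to the triple-sign data $\ep_{ij}\ep_{jk}\ep_{ki}$ via the point-scheme formula (Theorem~2.3) and then enumerates the possibilities by simple graphs on $n-1$ vertices, just as you propose via the commuting graph.

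Where you diverge is in how the Wedderburn type of $C(A)$ is determined. The paper does this by brute force: for each of the sign patterns arising when $n\le 5$ it writes out the presentation of $C(A)$ and identifies it with a product of matrix algebras, either by splitting off a central tensor factor or by exhibiting an explicit complete set of orthogonal idempotents (Lemma~3.6). Your route is more structural---treating $C^0$ as a twisted group algebra of $(\Z/2)^{n-1}$ and reading off the number $N$ of simples as $2^{r_0}$ from the radical of the commutator form over $\mathbb{F}_2$. This buys you a uniform explanation of why $N$ is always $4^m$ (for $n$ odd) or $2\cdot4^m$ (for $n$ even), and points toward the general conjecture; the paper's hands-on identifications are instead entirely self-contained and avoid any appeal to cocycle or twisted-group-algebra theory. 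Both approaches terminate in the same finite verification over graphs on at most four vertices, matching $\ell$ against the computed $N$; your admissible-set description of the irreducible components of the point scheme agrees with the paper's explicit case analysis in Lemmas~3.4--3.5.
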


This theorem asserts that \conjref{conj.main} is true if $n\leq 5$.

\section{Preliminaries}

\subsection{Notations}
For an algebra $A$, we denote by $\Mod A$ the category of right $A$-modules, and by $\mod A$ the full subcategory consisting of finitely generated modules.
The bounded derived category of $\mod A$ is denoted by $\Db(\mod A)$.

For a connected graded algebra $A$, that is, $A=\bigoplus_{i \in \N} A_i$ with $A_0 =k$,
we denote by $\GrMod A$ the category of graded right $A$-modules with $A$-module homomorphisms of degree zero,
and by $\grmod A$ the full subcategory consisting of finitely generated graded modules.

Let $A$ be a noetherian AS-Gorenstein algebra of dimension $n$ (see \cite[Section 1]{Jlc} for the definition).
We define the local cohomology modules of $M \in \grmod A$ by
$\H^i_\m(M):= \lim _{n \to \infty} \Ext^i_{A}(A/A_{\geq n}, M)$.
It is well-known that $\H_\m^i(A)=0$ for all $i\neq n$.
We say that $M \in \grmod A$ is graded maximal Cohen-Macaulay if $\H_{\m}^i(M)=0$ for all $i\neq n$.
We denote by $\CM (A)$ the full subcategory of $\grmod A$ consisting of graded maximal Cohen-Macaulay modules.

The stable category of graded maximal Cohen-Macaulay modules, denoted by $\uCM (A)$, has the same objects as $\CM (A)$
and the morphism set is given by
\[ \Hom_{\uCM (A)}(M, N) = \Hom_{\grmod A}(M,N)/P(M,N) \]
for any $M,N \in \CM (A)$, where $P(M,N)$ consists of degree zero $A$-module homomorphisms that factor
through a projective module in $\grmod A$.
Since $A$ is AS-Gorenstein, $\uCM (A)$ is a triangulated category with respect to the translation functor $M[-1]= \Om M$ (the syzygy of $M$)
by \cite[Theorem 3.1]{SV}.

\subsection{The algebra $C(A)$}
The method we use is due to Smith and Van den Bergh \cite{SV}; it was originally developed by Buchweitz, Eisenbud, and Herzog \cite{BEH}.

Let $S$ be an $n$-dimensional noetherian AS-regular algebra with the Hilbert series $H_S(t)=(1-t)^{-n}$.
Then $S$ is Koszul by \cite[Theorem 5.11]{Sm}.
Let $f \in S$ be a homogeneous regular central element of degree 2, and let $A =S/(f)$.
Then $A$ is Koszul by \cite[Lemma 5.1 (1)]{SV}, and there exists a central regular element $w \in A^!_2$ such that $A^!/(w) \cong S^!$ by \cite[Lemma 5.1 (2)]{SV}.
We can define the algebra
\[ C(A) := A^![w^{-1}]_0. \]
By \cite[Lemma 5.1 (3)]{SV}, we have $\dim_k C(A) = \dim_k (S^!)^{(2)} = 2^{n-1}$.

\begin{thm}[{\cite[Proposition 5.2]{SV}}] \label{thm.sv}
Let notation be as above. Then $\uCM(A) \cong \Db(\mod C(A))$.
\end{thm}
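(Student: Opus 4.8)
The plan is to prove the equivalence as a chain of three identifications, moving from $A$ to its Koszul dual $A^!$ and then localizing away the regular part by inverting $w$. Throughout I use that $A$ is a noetherian AS-Gorenstein Koszul algebra (\cite[Lemma 5.1 (1)]{SV}) and that $w \in A^!_2$ is central regular with $A^!/(w) \cong S^!$ finite-dimensional (\cite[Lemma 5.1 (2)]{SV}).

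First, since $A$ is noetherian AS-Gorenstein, the graded version of Buchweitz's theorem identifies the stable category with the graded singularity category,
\[ \uCM(A) \cong \D_{sg}^{gr}(A) := \Db(\grmod A)/\mathsf{perf}\, A, \]
where $\mathsf{perf}\, A \subseteq \Db(\grmod A)$ is the thick subcategory of perfect complexes. This replaces $\uCM(A)$ by a Verdier quotient inside $\Db(\grmod A)$ that is better adapted to the homological manipulations below.

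Second, I would transport the problem to the Koszul dual. Since $A$ is Koszul, Koszul duality relates $\Db(\grmod A)$ to the derived category of graded $A^!$-modules, and under this correspondence $\mathsf{perf}\, A$ is matched with the complexes having finite-length cohomology over $A^!$. Because $A^!/(w) \cong S^!$ is finite-dimensional and $w$ is central regular, $A^!$ is module-finite over the central polynomial subalgebra $k[w]$; hence a finitely generated graded $A^!$-module has finite length exactly when it is $w$-torsion. Matching distinguished subcategories therefore gives
\[ \D_{sg}^{gr}(A) \cong \Db(\qgr A^!), \]
the derived category of $\qgr A^! = \grmod A^!/\tors$, the quotient by the $w$-torsion modules.

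Third, I would compute $\qgr A^!$ using $w$. As $w$ is central and regular, Ore localization identifies $\qgr A^!$ with $\grmod A^![w^{-1}]$. The key point is that $A^!$ is generated in degree $1$, so $w \in A^!_2 = A^!_1\cdot A^!_1$; writing $w = \sum_i a_i b_i$ with $a_i, b_i \in A^!_1$ and inverting $w$ yields $1 = \sum_i (w^{-1}a_i)\,b_i$ with $w^{-1}a_i$ of degree $-1$, so $1 \in A^![w^{-1}]_{-1}\cdot A^![w^{-1}]_1$. Thus $A^![w^{-1}]$ is strongly graded, and $M \mapsto M_0$ is an equivalence $\grmod A^![w^{-1}] \cong \mod C(A)$, where $C(A) = A^![w^{-1}]_0$ is finite-dimensional of dimension $2^{n-1}$ by \cite[Lemma 5.1 (3)]{SV}. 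Passing to bounded derived categories and composing the three steps gives $\uCM(A) \cong \Db(\mod C(A))$.

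The main obstacle is the second step: making Koszul duality precise enough in this setting to identify $\mathsf{perf}\, A$ with the finite-length objects over $A^!$. Since $A$ has infinite global dimension and $A^!$ is infinite-dimensional, the elementary form of Koszul duality does not apply verbatim, and one must use the version tailored to AS-Gorenstein Koszul algebras and check its compatibility with the grading shift $M \mapsto M(1)$ on $A$ and with the translation on $\uCM(A)$. Concretely, I would verify that the $2$-periodicity of $\uCM(A)$ (from $\Om^2 M \cong M(-2)$, reflecting $\deg f = 2$) corresponds on the dual side precisely to twisting by the degree-$2$ unit $w$, so that no information is lost on localizing. It is this compatibility check, rather than the formal composition of equivalences, where the substance of the argument lies.
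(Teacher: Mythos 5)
The paper contains no proof of this statement at all---it is quoted verbatim from Smith--Van den Bergh \cite[Proposition 5.2]{SV}---and your three-step argument (Buchweitz's equivalence $\uCM(A)\cong \Db(\grmod A)/\mathsf{perf}\,A$, Koszul duality matching perfect complexes with the $w$-torsion objects to obtain $\Db(\qgr A^!)$, then localization at the central regular element $w$ together with the strongly graded equivalence $\grmod A^![w^{-1}]\cong \mod C(A)$) is precisely the route taken in that source. Your reconstruction is correct in outline, and you rightly flag the AS-Gorenstein refinement of Koszul duality in the second step, including its compatibility with the degree-two periodicity $\Om^2 M\cong M(-2)$ versus twisting by $w$, as the one point where the real work lies.
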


\subsection{The point schemes of skew polynomial algebras}

Let $S$ be a quantum polynomial algebra of dimension $n$ (see \cite[Definition 2.1]{Mcp} for the definition).

\begin{dfn}
A graded module $M \in \GrMod S$ is called a point module if $M$ is cyclic, generated in degree 0, and $H_M(t) = (1-t)^{-1}$.
\end{dfn}
If $M \in \GrMod S$ is a point module, then $M$ be written as a quotient $S/(g_1S+ g_2S+ \cdots + g_{n-1}S)$ with linearly independent $g_1, \dots, g_{n-1} \in S_1$
by \cite[Corollary 5.7, Theorem 3.8]{Mcp},
so we can associate it with a unique point $p_M := \cV(g_1, \dots, g_{n-1})$ in $\P({S_1^*}) = \P^{n-1}$.
Then the subset
\[ E:= \{ p_M \in \P^{n-1} \mid M \in \GrMod S \;\text{is a point module}\} \]
has a $k$-scheme structure by \cite{ATV}, and it is called the point scheme of $S$. 
Point schemes have a pivotal role in noncommutative algebraic geometry.

Thanks to the following result, we can compute the point scheme of a graded skew polynomial algebra.

\begin{thm}[{\cite[Proposition 4.2]{V}, \cite[Theorem 1 (1)]{BDL}}]\label{thm.ps}
Let $S = k\ang{x_1, \dots, x_n}/(x_ix_j-\al_{ij}x_jx_i)$ be a graded skew polynomial algebra.
Then the point scheme of $S$ is given by
\[ E= \bigcap_{\substack{1\leq i<j<k\leq n \\ \al_{ij}\al_{jk}\al_{ki} \neq 1}}\cV(x_ix_jx_k) \quad \subset \P^{n-1}. \]
\end{thm}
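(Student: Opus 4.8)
The plan is to compute the point scheme directly from the definition, by unwinding the point-module/point-sequence correspondence into the multilinearized incidence relations and then reducing the existence of a point module to a linear-algebra consistency condition governed by triples of indices.

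First I would record what a point module encodes. Let $M=\bigoplus_{i\ge 0}M_i\in\GrMod S$ be a point module with generator $m_0$ spanning $M_0$, and fix a basis $m_i$ of each one-dimensional $M_i$. The action of the generators is then recorded by scalars $m_i x_j=\la^{(i)}_j m_{i+1}$, giving points $p_i=(\la^{(i)}_1:\cdots:\la^{(i)}_n)\in\P^{n-1}$ with $p_M=p_0$. Imposing the defining relations $m_i(x_jx_k-\al_{jk}x_kx_j)=0$ yields, for every $j<k$,
\[
\la^{(i)}_j\la^{(i+1)}_k-\al_{jk}\la^{(i)}_k\la^{(i+1)}_j=0,
\]
so consecutive points lie on the incidence locus $\Gamma=\{(a,b)\in\P^{n-1}\times\P^{n-1}\mid a_jb_k=\al_{jk}a_kb_j\ (j<k)\}$. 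Conversely such a sequence reconstructs a point module. Hence $p\in\P^{n-1}$ lies in $E$ precisely when it is the first term of an infinite sequence $(p_0,p_1,\dots)$ with $(p_i,p_{i+1})\in\Gamma$ for all $i$.

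Next I would solve the incidence equations. Fix $p=(a_1:\cdots:a_n)$ with support $I=\{i:a_i\ne0\}$ and seek $q=(b_1:\cdots:b_n)\ne0$ with $(p,q)\in\Gamma$. For $m\notin I$ and any $i\in I$, the relation between $i$ and $m$ forces $b_m=0$, so $q$ is supported on $I$; and for $i<j$ in $I$ the equation $a_ib_j=\al_{ij}a_jb_i$ becomes $c_j=\al_{ij}c_i$ after setting $c_i:=b_i/a_i$. Writing $i_0=\min I$ and normalizing $c_{i_0}=1$, one gets $c_i=\al_{i_0 i}$ for $i>i_0$, and the remaining relations $c_j=\al_{ij}c_i$ impose exactly $\al_{i_0 j}=\al_{i_0 i}\al_{ij}$, i.e. $\al_{i_0 i}\al_{ij}\al_{j i_0}=1$. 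A short cocycle computation shows that once all triples through $i_0$ are compatible one has $\al_{ij}=\ga_j/\ga_i$ with $\ga_i:=\al_{i_0 i}$, whence $\al_{ij}\al_{jk}\al_{ki}=1$ for every $\{i,j,k\}\subseteq I$; thus the system is solvable if and only if all triples in $I$ are compatible. Moreover the solution has $b_i=a_i\ga_i\ne0$ for $i\in I$, so $\operatorname{supp}(q)=I$ and the sequence can be continued indefinitely without changing the extendability condition. Since a monomial $x_ix_jx_k$ $(i<j<k)$ is nonzero at $p$ exactly when $\{i,j,k\}\subseteq I$, we obtain
\[
p\in\!\!\bigcap_{\substack{1\le i<j<k\le n\\ \al_{ij}\al_{jk}\al_{ki}\ne1}}\!\!\cV(x_ix_jx_k)
\quad\Longleftrightarrow\quad
\text{every }\{i,j,k\}\subseteq I\text{ has }\al_{ij}\al_{jk}\al_{ki}=1,
\]
which is precisely the solvability criterion, giving the set-theoretic equality $E=\bigcap\cV(x_ix_jx_k)$. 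To pin down the scheme structure I would eliminate $b$ from the three relations attached to a triple $\{i,j,k\}$: regarded as linear forms in $(b_i,b_j,b_k)$, their coefficient determinant equals $a_ia_ja_k(\al_{ik}-\al_{ij}\al_{jk})$, so for an incompatible triple the monomial $x_ix_jx_k$ lies in the defining ideal of $E$; as $\bigcap\cV(x_ix_jx_k)$ is the reduced monomial scheme (a union of coordinate linear subspaces) with the same underlying set as $E$, the two agree.

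The main obstacle is the middle step: the pairwise incidence equations a priori constrain only adjacent indices, so one must show that they assemble into a single global solution exactly under the triple conditions, and that the support is preserved so that one compatible $q$ bootstraps to an entire point module. The cocycle reduction—that compatibility of all triples through the minimal index $i_0$ already forces compatibility of every triple—is what converts the local, triple-by-triple criterion into the global existence of a point module, and matching the monomial determinants with the defining ideal is what upgrades the set-level equality to the scheme-theoretic statement.
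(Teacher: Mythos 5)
Note first that the paper does not prove this statement at all: Theorem~\ref{thm.ps} is imported verbatim from \cite[Proposition 4.2]{V} and \cite[Theorem 1 (1)]{BDL}, so your proposal has to be measured against those cited proofs rather than against anything in this text. At the level of closed points your argument is correct and is essentially the standard one from the literature: multilinearize the relations to the incidence locus $\Gamma=\{(a,b)\mid a_jb_k=\al_{jk}a_kb_j\}$, show any successor $q$ of $p$ has the same support $I$, solve the resulting triangular system by normalizing $c_{i_0}=1$ at the minimal support index (note any nonzero solution automatically has $c_{i_0}\neq 0$, as you implicitly use), read off the consistency conditions as exactly the triple conditions $\al_{i_0 i}\al_{ij}\al_{ji_0}=1$, and propagate via the cocycle identity $\al_{ij}=\ga_j/\ga_i$ to all triples in $I$. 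Since the successor has support exactly $I$, the one-step criterion bootstraps to an infinite sequence, i.e.\ a genuine point module, and the translation to $\bigcap\cV(x_ix_jx_k)$ is correct.

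The scheme-theoretic coda, however, has a genuine gap, in two places. First, you silently identify the point scheme $E$ (in the sense of \cite{ATV}: an inverse limit of truncated point-module schemes $E_d\subseteq(\P^{n-1})^{\times d}$) with the scheme-theoretic image of $\Gamma=E_2$ under the first projection; this requires the stabilization $E_{d+1}\xrightarrow{\;\sim\;}E_d$, which your own formulas essentially supply (the successor $q$ is \emph{unique}, $b_i=a_i\al_{i_0i}$ on each support stratum) but which you never state, and without it the phrase ``defining ideal of $E$'' is not yet meaningful. Second, the Cramer/adjugate identity only gives $\det\cdot\, b_l\in(f_{ij},f_{ik},f_{jk})$ for $l\in\{i,j,k\}$, where $\det=a_ia_ja_k(\al_{ik}-\al_{ij}\al_{jk})$; so the determinant lies in the elimination ideal only after saturating with respect to the irrelevant ideal in the $b$-variables, and for $n>3$ the locus $b_i=b_j=b_k=0$, $b\neq 0$ is nonempty, so your three rows alone do not force $\det$ to vanish ideal-theoretically. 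One must bring in the mixed relations $f_{il}=a_ib_l-\al_{il}a_lb_i$ (for instance they yield $a_m\cdot\det\cdot\, b_l\in I_\Gamma$ for $m\in\{i,j,k\}$ and $l\notin\{i,j,k\}$) and actually carry out the saturation; as written, ``the monomial lies in the defining ideal of $E$'' is asserted, not proved. That said, your closing squeeze is worth keeping: once $x_ix_jx_k\in I(E)$ is genuinely established for every incompatible triple, the squarefree monomial ideal is radical, so the set-theoretic equality gives $I(E)\subseteq\sqrt{I(E)}=(x_ix_jx_k)_{\al_{ij}\al_{jk}\al_{ki}\neq 1}\subseteq I(E)$, which simultaneously proves the scheme equality and the reducedness of $E$ --- a clean way to upgrade the variety-level statement of \cite{BDL} to the scheme statement quoted in Theorem~\ref{thm.ps}.
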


For $1\leq i_0,\dots,i_s\leq n$, we define the subspace
\[\P(i_1,\dots,i_s):= \bigcap_{\substack{1\leq j \leq n \\ j \neq i_1, \dots, j \neq i_s}} \cV(x_j) \; \subset \P^{n-1}.\]
It is easy to see that the point scheme of a graded skew polynomial algebra in 3 variables is isomorphic to 
$\P^2$ or $\P(2,3) \cup \P(1,3) \cup \P(1,2)$.
The following is the classification of the point schemes of graded skew polynomial algebras in 4 variables.

\begin{prop}[{\cite[Corollary 5.1]{V}, \cite[Section 4.2]{BDL}}] \label{prop.4ps}
Let $S = k\ang{x_1, x_2, x_3, x_4}/(x_ix_j-\al_{ij}x_jx_i)$ be a graded skew polynomial algebra in 4 variables.
Then the point scheme of $S$ is isomorphic one of the following:
\begin{itemize}
\item $\P^3$;
\item $\P(1,2,4) \cup \P(1,2,3) \cup \P(3,4)$;
\item $\P(2,3,4) \cup \P(1,4) \cup \P(1,3) \cup \P(1,2)$;
\item $\P(3,4) \cup \P(2,4) \cup \P(2,3) \cup \P(1,4) \cup \P(1,3) \cup \P(1,2)$.
\end{itemize}
\end{prop}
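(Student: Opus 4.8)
The plan is to start from the explicit description of the point scheme in \thmref{thm.ps}, which for $n=4$ expresses $E$ as the intersection $\bigcap \cV(x_ix_jx_k)$ taken over those triples $1\le i<j<k\le 4$ for which $c_{ijk}:=\al_{ij}\al_{jk}\al_{ki}\neq 1$. There are exactly four such triples, namely $123,\,124,\,134,\,234$, so the point scheme is completely determined by the subset $T\subseteq\{123,124,134,234\}$ of ``defective'' triples. The whole proposition then reduces to (i) determining which subsets $T$ can occur and (ii) computing, for each, the resulting intersection of monomial loci inside $\P^3$. Throughout I will use only that $\al_{ij}\al_{ji}=1$, so that each $\al_{ji}=\al_{ij}^{-1}$.

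The key observation, which I would isolate first, is a multiplicative cocycle identity among the four quantities $c_{ijk}$ (it is the statement that the product of the $c_{ijk}$ over the oriented boundary of the $3$-simplex is trivial). A direct computation, cancelling $\al_{31}\al_{13}=1$ in the numerator and $\al_{24}\al_{42}=1$ in the denominator, gives
\[
\frac{c_{123}\,c_{134}}{c_{124}\,c_{234}} = \frac{(\al_{12}\al_{23}\al_{31})(\al_{13}\al_{34}\al_{41})}{(\al_{12}\al_{24}\al_{41})(\al_{23}\al_{34}\al_{42})} = \frac{\al_{12}\al_{23}\al_{34}\al_{41}}{\al_{12}\al_{23}\al_{34}\al_{41}} = 1,
\]
so that $c_{123}c_{134}=c_{124}c_{234}$ holds identically. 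This relation makes it impossible for exactly one of the $c_{ijk}$ to differ from $1$: if three of them equal $1$, the identity forces the fourth to equal $1$ as well. Consequently $|T|\in\{0,2,3,4\}$, and the ``three coordinate hyperplanes'' configuration that $|T|=1$ would produce never appears. This is precisely why the list in the proposition has four entries rather than five.

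Next I would exploit the symmetric group $S_4$ acting by permutation of $x_1,\dots,x_4$; it acts on $\P^3$ by isomorphisms and permutes the four triples exactly as it permutes their complementary singletons, i.e.\ as the full symmetric group on four letters, hence transitively on the subsets $T$ of each fixed cardinality. It therefore suffices to compute $E_T$ for one representative of each cardinality $0,2,3,4$. For $|T|=0$ there are no constraints and $E=\P^3$. For $T=\{123,124\}$ the conditions $a_1a_2a_3=a_1a_2a_4=0$ split into $a_1=0$, $a_2=0$, or $a_3=a_4=0$, giving $E=\cV(x_1)\cup\cV(x_2)\cup\P(1,2)=\P(2,3,4)\cup\P(1,3,4)\cup\P(1,2)$, which the permutation $(1\,3)(2\,4)$ carries to $\P(1,2,4)\cup\P(1,2,3)\cup\P(3,4)$. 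For $T=\{123,124,134\}$ (the triples through the vertex $1$) the conditions force $x_1=0$ or at most one of $x_2,x_3,x_4$ nonzero, yielding $\P(2,3,4)\cup\P(1,4)\cup\P(1,3)\cup\P(1,2)$. For $|T|=4$ every product of three coordinates must vanish, so at most two coordinates are nonzero and $E=\bigcup_{i<j}\P(i,j)$ is the union of the six coordinate lines. Matching these four schemes against the statement completes the argument.

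I expect the only genuinely delicate point to be the cocycle identity and its corollary that $|T|\neq 1$; everything afterwards is a routine decomposition of monomial loci into coordinate subspaces, organized by the $S_4$-symmetry. One should also check that each cardinality $0,2,3,4$ is in fact realizable by some choice of scalars $\al_{ij}$---for instance by setting all defective $c_{ijk}$ equal to a common value $\neq 1$, which respects the identity above---so that none of the four entries is spurious, although this is not strictly required for the stated ``one of the following'' classification.
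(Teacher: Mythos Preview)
Your argument is correct, but there is nothing in the paper to compare it against: \propref{prop.4ps} is quoted from the literature (Vitoria; Belmans--De~Laet--Le~Bruyn) and carries no proof in this paper. The cocycle identity $c_{123}c_{134}=c_{124}c_{234}$ you isolate is exactly the right mechanism ruling out $|T|=1$, and the $S_4$-reduction plus the monomial decompositions are all accurate.

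If you want a point of contact with the paper's own arguments, look at \lemref{lem.clp1}, which treats the $(\pm 1)$-skew special case. There the organization is different: rather than working with all four triples and the single symmetric relation you used, the paper anchors at the vertex $4$ and records only the three signs $\ep_{4i}\ep_{ij}\ep_{j4}$ for $\{i,j\}\subset\{1,2,3\}$, invoking the identity \eqref{eq.3c} to recover the remaining triple product $\ep_{12}\ep_{23}\ep_{31}$ from these. This amounts to encoding the data as a simple graph on three vertices and then running through the four isomorphism classes of such graphs. Your approach is more symmetric and yields the general (not just $\pm 1$) statement directly; the paper's anchored-graph viewpoint is tailored to the $(\pm 1)$ case and scales more readily to the $n=5$ classification carried out in \lemref{lem.clp2}.
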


\section{Results}

Throughout this section,
\begin{itemize}
\item $S = k\ang{x_1, \dots, x_n}/(x_ix_j-\ep_{ij}x_jx_i)$ is a graded ($\pm 1$)-skew polynomial algebra,
\item $E$ is the point scheme of $S$,
\item $f= x_1^2 + x_2^2 + \cdots + x_n^2 \in S_2$ (a regular central element of $S$), and
\item $A=S/(f)$.
\end{itemize}
Note that $\ep_{ij} = \ep_{ji}$ holds for every $1\leq i, j\leq n$.

\begin{lem} \label{lem.c}
\begin{enumerate}
\item $A^!$ is isomorphic to $k\ang{x_1, \dots, x_n}/(\ep_{ij}x_ix_j+x_jx_i, x_n^2-x_i^2)_{1\leq i, j\leq n, i\neq j}$.
\item $w=x_n^2 \in A^!_2$ is a central regular element such that $A^!/(w) \cong S^!$.
\item $C(A) := A^![w^{-1}]_0$ is isomorphic to 
\[k\ang{t_1, \dots, t_{n-1}}/(t_it_j+\ep_{ni}\ep_{ij}\ep_{jn}t_jt_i,\, t_i^2-1)_{1\leq i, j\leq n-1, i\neq j}.\]
\end{enumerate}
\end{lem}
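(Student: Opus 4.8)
The three statements are all really about the Koszul dual $A^!$ and its degree-$2$ central element, so the plan is to compute $A^!$ explicitly and then track $w = x_n^2$ through localization.

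\emph{Part (1).} Since $S$ is Koszul and $f$ is a regular central element of degree $2$, $A = S/(f)$ is Koszul by \cite[Lemma 5.1(1)]{SV}, so $A^!$ is the quadratic dual of $A$. I would compute it by the standard orthogonality recipe. Writing $V = \bigoplus_i k x_i$ and taking the dual basis, the relation space $R \subset V\otimes V$ of $A$ is spanned by the skew relations $x_i\otimes x_j - \ep_{ij}x_j\otimes x_i$ ($i<j$) together with the single relation $\sum_k x_k\otimes x_k$ coming from $f$, and $A^! = T(V^*)/(R^\perp)$. A direct pairing computation shows that $R^\perp$ is spanned by $x_i^*\otimes x_j^* + \ep_{ij}x_j^*\otimes x_i^*$ for $i\neq j$ (equivalently $\ep_{ij}x_i^*x_j^* + x_j^*x_i^*$) and by $x_n^*\otimes x_n^* - x_i^*\otimes x_i^*$ for $i<n$. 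Matching the dimension count $\dim_k R^\perp = n^2 - \binom{n}{2} - 1 = \binom{n+1}{2}-1$ against the number of listed relations confirms that these generate $R^\perp$, which gives the stated presentation of $A^!$ after relabeling $x_i^*$ as $x_i$.

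\emph{Part (2).} First I would check directly that $w = x_n^2$ is central: from $x_nx_j = -\ep_{nj}x_jx_n$ one gets $x_n^2 x_j = \ep_{nj}^2 x_j x_n^2 = x_j x_n^2$, using $\ep_{nj}^2 = 1$. For the isomorphism $A^!/(w)\cong S^!$, note that $x_i\mapsto x_i$ defines a surjection $\pi\colon A^! \to S^!$ (the skew relations are common to both, and $x_n^2 - x_i^2\mapsto 0$ since all squares vanish in $S^!$). Because in $A^!$ every $x_i^2$ equals $x_n^2$, the extra relations $x_i^2 = 0$ cutting out $S^!$ generate precisely the two-sided ideal $(x_n^2)$, so $\Ker\pi = (w)$ and $A^!/(w)\cong S^!$. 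Regularity of $w$ then follows from a Hilbert series count: Koszulity gives $H_{A^!}(t) = (1+t)^{n-1}/(1-t)$ and $H_{S^!}(t) = (1+t)^n$, and since $(1-t^2)H_{A^!}(t) = (1+t)^n = H_{A^!/(w)}(t)$, the kernel of multiplication by $w$ on $A^!$ has zero Hilbert series, hence is $0$. (This also identifies $x_n^2$ with the element produced abstractly by \cite[Lemma 5.1(2)]{SV}.)

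\emph{Part (3).} Since $w = x_n^2$ is central and regular, $x_n$ becomes invertible after localizing, with $x_n^{-1} = x_n w^{-1}$. I would set $t_i := x_i x_n^{-1}$ for $1\leq i\leq n-1$; these lie in degree $0$ and generate $C(A) = A^![w^{-1}]_0$, since $A^!$ is generated in degree $1$ and any degree-zero word can be rewritten using $x_nx_n^{-1}=1$. Using $x_n^{-1}x_i = -\ep_{ni}x_ix_n^{-1}$ together with the relations of Part (1), a short computation yields $t_i^2 = -\ep_{ni}$ and $t_it_j = -\ep_{ni}\ep_{ij}\ep_{jn}\,t_jt_i$ for $i\neq j$. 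Rescaling each $t_i$ by a square root of $-\ep_{ni}$ (available since $k$ is algebraically closed of characteristic $0$) normalizes the squares to $t_i^2 = 1$ without changing the commutation signs, giving the stated relations. The presented algebra is spanned by the square-free monomials $t_{i_1}\cdots t_{i_r}$ with $i_1<\cdots<i_r$, so its dimension is at most $2^{n-1}$; since $\dim_k C(A) = 2^{n-1}$ by \cite[Lemma 5.1(3)]{SV}, the natural surjection onto $C(A)$ is an isomorphism.

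The computational core is Part (1). The most delicate point is the regularity of $w$ in Part (2), which I would settle by the Hilbert-series comparison above rather than by inverting $w$ directly; the only subtlety in Part (3) is the rescaling that converts $t_i^2 = -\ep_{ni}$ into $t_i^2 = 1$.
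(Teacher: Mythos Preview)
Your proposal is correct and follows essentially the same approach as the paper: Parts~(1)--(2) are handled there simply as ``direct calculation,'' so your explicit orthogonality computation and Hilbert-series regularity argument just fill in details the paper omits; for Part~(3) the paper uses the generators $t_i = x_n x_i w^{-1}$ (which differ from your $x_i x_n^{-1}$ only by the scalar $-\ep_{ni}$), derives the same relations $t_it_j = -\ep_{ni}\ep_{ij}\ep_{jn}t_jt_i$ and $t_i^2 = -\ep_{ni}$, matches dimensions to get an isomorphism, and then rescales exactly as you do.
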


\begin{proof}
(1) and (2) follow from direct calculation.

(3) Since $S$ has a $k$-basis $\{x_1^{i_1}x_2^{i_2} \cdots x_n^{i_n} \mid i_1, i_2,\dots,i_n \geq 0 \}$, and 
\[ (x_nx_iw^{-1})(x_nx_jw^{-1}) = x_nx_ix_nx_jw^{-2} = -\ep_{ni}x_n^2x_ix_jw^{-2} = -\ep_{ni}x_ix_jw^{-1} \]
in $C(A)$ for $1\leq i\leq n-1, i\neq j$, it follows that $\{x_nx_1w^{-1}, \dots, x_nx_{n-1}w^{-1}\}$ is a set of generators of $C(A)$.
Put $t_i := x_nx_{i}w^{-1}$ for $1\leq i\leq n-1$.
Since
\begin{align*}
t_it_j &=  (x_nx_iw^{-1})(x_nx_jw^{-1}) = -\ep_{ni}x_ix_jw^{-1} = \ep_{ni}\ep_{ji}x_jx_iw^{-1} \\
&= -\ep_{ni}\ep_{ij}\ep_{jn}(-\ep_{nj}x_jx_iw^{-1}) = -\ep_{ni}\ep_{ij}\ep_{jn}(x_nx_jw^{-1})(x_nx_iw^{-1}) =  -\ep_{ni}\ep_{ij}\ep_{jn}t_jt_i,
\end{align*}
for $1\leq i, j\leq n-1, i\neq j$, and
\begin{align*}
t_i^2 &= (x_nx_iw^{-1})(x_nx_iw^{-1}) = -\ep_{ni}x_i^2w^{-1} = -\ep_{ni}x_n^2w^{-1} = -\ep_{ni}
\end{align*}
for $1\leq i\leq n-1$, we have a surjection
$k\ang{t_1, \dots, t_{n-1}}/(t_it_j+\ep_{ni}\ep_{ij}\ep_{jn}t_jt_i,\, t_i^2+\ep_{ni}) \to C(A)$.
This is an isomorphism because the algebras have the same dimension.
Since $\ep_{ni} \neq 0$ for $1\leq i\leq n-1$, the homomorphism defined by $t_i \to \sqrt{-\ep_{ni}}t_i$ induces the isomorphism
\[ k\ang{t_1, \dots, t_{n-1}}/(t_it_j+\ep_{ni}\ep_{ij}\ep_{jn}t_jt_i,\, t_i^2+\ep_{ni}) \xrightarrow{\sim} k\ang{t_1, \dots, t_{n-1}}/(t_it_j+\ep_{ni}\ep_{ij}\ep_{jn}t_jt_i,\, t_i^2-1). \]
\end{proof}

\begin{prop}\label{prop.pc}
\begin{enumerate}
\item If $E = \P^{n-1}$, then $C(A)$ is isomorphic to
\[C_{+}:=k\ang{t_1, \dots, t_{n-1}}/(t_it_j+t_jt_i,\, t_i^2-1)_{1\leq i, j\leq n-1, i\neq j}.\]
\item $E = \bigcup_{1\leq i< j\leq n} \P(i,j)$
if and only if $C(A)$ is isomorphic to
\[C_{-}:=k\ang{t_1, \dots, t_{n-1}}/(t_it_j-t_jt_i,\, t_i^2-1)_{1\leq i, j\leq n-1, i\neq j}.\]
\end{enumerate}
\end{prop}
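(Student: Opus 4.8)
The plan is to reduce everything to the commutation factors in the presentation of $C(A)$ furnished by \lemref{lem.c}(3) and to match them against the combinatorial data defining the point scheme in \thmref{thm.ps}. By \lemref{lem.c}(3) the generators $t_i=x_nx_iw^{-1}$ of $C(A)$ satisfy $t_i^2=1$ and $t_it_j=-q_{ijn}\,t_jt_i$, where for a triple I write $q_{ijk}:=\ep_{ij}\ep_{jk}\ep_{ik}\in\{\pm1\}$; this is exactly the factor $\ep_{ni}\ep_{ij}\ep_{jn}$ appearing in \lemref{lem.c}(3), since $\ep_{ij}=\ep_{ji}$ and $\ep_{\ell m}^2=1$. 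By \thmref{thm.ps} the point scheme is $E=\bigcap_{q_{ijk}=-1}\cV(x_ix_jx_k)$, so the whole statement is about matching the sign pattern $(q_{ijn})_{i<j}$ with the geometry of $E$.

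First I would record two purely geometric reformulations. Since $\cV(x_ix_jx_k)\subsetneq\P^{n-1}$ whenever $n\geq3$, we have $E=\P^{n-1}$ if and only if no triple has $q_{ijk}=-1$, i.e. $q_{ijk}=1$ for all $i<j<k$. On the other hand, the union of coordinate lines satisfies $\bigcup_{i<j}\P(i,j)=\bigcap_{\text{all }i<j<k}\cV(x_ix_jx_k)$, because a point lies in the left side exactly when at most two of its coordinates are nonzero, which is precisely the condition that every monomial $x_ix_jx_k$ vanishes on it. Hence $E=\bigcup_{i<j}\P(i,j)$ holds iff $q_{ijk}=-1$ for all $i<j<k$: the ``if'' direction is immediate from the two displayed intersections, and for ``only if'' I would argue by contraposition, taking as a witness the point $p$ whose only nonzero coordinates are $x_a,x_b,x_c=1$ for a putative triple with $q_{abc}=+1$; every defining monomial of $E$ belongs to a triple $\neq\{a,b,c\}$ and hence vanishes at $p$, so $p\in E$ while $p\notin\bigcup_{i<j}\P(i,j)$.

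The bridge between these all-triples conditions and the presentation of $C(A)$, which only involves triples through $n$, is the cocycle-type identity $q_{abc}=q_{abn}q_{bcn}q_{acn}$ for $a,b,c\leq n-1$, which follows at once from $\ep_{\ell n}^2=1$. Consequently, for a fixed sign $\delta\in\{\pm1\}$, every triple has $q_{ijk}=\delta$ if and only if every triple containing $n$ has $q_{ijn}=\delta$. This is exactly what lets me pass from the geometric conditions on $E$ (phrased over all triples) to the commutation factors $-q_{ijn}$ of $C(A)$ (phrased over triples through $n$).

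Assembling these: for (1), $E=\P^{n-1}$ gives $q_{ijn}=1$ for all $i<j\leq n-1$, so every commutation factor equals $-1$ and the presentation of \lemref{lem.c}(3) becomes verbatim that of $C_+$. For (2), $E=\bigcup_{i<j}\P(i,j)$ gives $q_{ijn}=-1$ for all $i<j\leq n-1$, so every factor equals $+1$ and $C(A)$ is literally $C_-$, giving one implication. For the converse I expect the only genuine subtlety, namely distinguishing ``presented by $C_-$'' from ``abstractly isomorphic to $C_-$''. I would resolve it by noting that $C_-$ is commutative and, since $t_i^2-1=0$ splits over our field, $C_-\cong(k[x]/(x^2-1))^{\otimes(n-1)}\cong k^{2^{n-1}}$; thus any algebra isomorphic to $C_-$ is commutative. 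If $C(A)\cong C_-$ then $C(A)$ is commutative, and since each $t_i$ is a unit (so $t_jt_i\neq0$) the relation $t_it_j=-q_{ijn}t_jt_i$ together with $t_it_j=t_jt_i$ forces $q_{ijn}=-1$ for all $i<j\leq n-1$; by the bridge identity every triple then has $q=-1$, so $E=\bigcup_{i<j}\P(i,j)$. The main obstacle is precisely this commutativity argument in the reverse direction of (2); everything else is bookkeeping of signs organized by the identity $q_{abc}=q_{abn}q_{bcn}q_{acn}$.
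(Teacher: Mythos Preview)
Your argument is correct and follows the same route as the paper: both hinge on the cocycle identity $q_{abc}=q_{abn}q_{bcn}q_{acn}$ (the paper's (3.1)) to pass between ``all triples'' and ``triples through $n$'', then read off the presentation of $C(A)$ from \lemref{lem.c}(3), and handle the converse of (2) via commutativity of $C_{-}$. You simply make explicit two steps the paper leaves terse---the witness point showing $E=\bigcup_{i<j}\P(i,j)$ forces $q_{ijk}=-1$ for every triple, and the unit argument extracting $q_{ijn}=-1$ from commutativity of $C(A)$.
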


\begin{proof}
First note that
\begin{align}\label{eq.3c}
\ep_{ij}\ep_{jk}\ep_{ki} = (\ep_{ni}\ep_{ij}\ep_{jn})(\ep_{nj}\ep_{jk}\ep_{kn})(\ep_{nk}\ep_{ki}\ep_{in})
\end{align}
for $1\leq i<j<k\leq n$.

(1) By \thmref{thm.ps}, (\ref{eq.3c}), and \lemref{lem.c} (3), it follows that
\begin{align*}
E = \P^{n-1} 
&\Longleftrightarrow \ep_{ij}\ep_{jk}\ep_{ki} = 1 \;\text{for every}\; 1\leq i<j<k\leq n\\
&\Longleftrightarrow \ep_{ni}\ep_{ij}\ep_{jn} = 1 \;\text{for every}\; 1\leq i<j\leq n\\
&\Longrightarrow C(A) \cong C_{+}.
\end{align*}

(2) By \thmref{thm.ps}, (\ref{eq.3c}), and \lemref{lem.c} (3), it follows that
\begin{align*}
E = \bigcup_{1\leq i< j\leq n} \P(i,j)
&\Longleftrightarrow \ep_{ij}\ep_{jk}\ep_{ki} \neq 1 \;\text{for every}\; 1\leq i<j<k\leq n\\
&\Longleftrightarrow \ep_{ij}\ep_{jk}\ep_{ki} = -1 \;\text{for every}\; 1\leq i<j<k\leq n\\
&\Longleftrightarrow \ep_{ni}\ep_{ij}\ep_{jn} = -1 \;\text{for every}\; 1\leq i<j\leq n\\
&\Longleftrightarrow C(A) \cong C_{-}.
\end{align*}
Here the last $\Longleftarrow$ is by commutativity of $C(A)$.
\end{proof}

\begin{thm}\label{thm.gst}
\begin{enumerate}
\item If $E=\P^{n-1}$ and $n$ is odd, then $\uCM(S/(f)) \cong \Db(\mod k)$.
\item If $E=\P^{n-1}$ and $n$ is even, then $\uCM(S/(f)) \cong \Db(\mod k^2)$.
\item $E = \bigcup_{1\leq i< j\leq n} \P(i,j)$ if and only if $\uCM(S/(f)) \cong \Db(\mod k^{2^{n-1}})$.
\end{enumerate}
\end{thm}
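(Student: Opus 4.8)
The plan is to reduce the entire statement to a question about the finite-dimensional algebra $C(A)$ via the Smith--Van den Bergh equivalence $\uCM(S/(f)) \cong \Db(\mod C(A))$ of \thmref{thm.sv}, and then to identify $C(A)$ explicitly in each case using \propref{prop.pc}. In cases (1) and (2) the hypothesis $E=\P^{n-1}$ gives $C(A)\cong C_+$, while in case (3) the condition $E=\bigcup_{i<j}\P(i,j)$ is equivalent to $C(A)\cong C_-$; the theorem then becomes a question about the representation-theoretic structure of the two model algebras $C_+$ and $C_-$.

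For (1) and (2), I would observe that $C_+=k\ang{t_1,\dots,t_{n-1}}/(t_it_j+t_jt_i,\,t_i^2-1)$ is precisely the Clifford algebra of a nondegenerate quadratic form on an $(n-1)$-dimensional space. Since $k$ is algebraically closed of characteristic $0$, the classical structure theorem for Clifford algebras applies: when $n-1$ is even (that is, $n$ odd) one has $C_+\cong M_{2^{(n-1)/2}}(k)$, and when $n-1$ is odd (that is, $n$ even) one has $C_+\cong M_{2^{(n-2)/2}}(k)\times M_{2^{(n-2)/2}}(k)$; a dimension count against $\dim_k C_+=2^{n-1}$ confirms both. A full matrix algebra $M_r(k)$ is Morita equivalent to $k$, and Morita equivalence induces an equivalence of bounded derived categories, so $\Db(\mod C_+)\cong\Db(\mod k)$ in the odd case and $\Db(\mod C_+)\cong\Db(\mod k^2)$ in the even case. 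Combined with \thmref{thm.sv} this yields (1) and (2).

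For (3), the forward implication is immediate: if $E=\bigcup_{i<j}\P(i,j)$, then \propref{prop.pc}(2) gives $C(A)\cong C_-$, which is commutative, so the Chinese Remainder Theorem factors it as $k[t_1,\dots,t_{n-1}]/(t_i^2-1)\cong k^{2^{n-1}}$, and \thmref{thm.sv} gives $\uCM(S/(f))\cong\Db(\mod k^{2^{n-1}})$. The converse is the substantive direction and the main obstacle: I must recover commutativity of $C(A)$---equivalently, by \propref{prop.pc}(2), the geometric condition on $E$---from the derived equivalence $\Db(\mod C(A))\cong\Db(\mod k^{2^{n-1}})$ alone. The plan is to use that the number of isomorphism classes of simple modules is a derived invariant (it equals the rank of $K_0$), so $C(A)$ has exactly $2^{n-1}$ simple modules. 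Since $\dim_k C(A)=2^{n-1}$ by \lemref{lem.c}, writing the Wedderburn--Artin decomposition $C(A)/\operatorname{rad}C(A)\cong\prod_{i=1}^{2^{n-1}}M_{d_i}(k)$ forces $\sum_i d_i^2\le 2^{n-1}$ with $2^{n-1}$ factors, whence every $d_i=1$ and $\operatorname{rad}C(A)=0$; thus $C(A)\cong k^{2^{n-1}}$ is commutative.

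It then remains to read off what commutativity of $C(A)$ means in the presentation of \lemref{lem.c}(3). The delicate point to verify is that if some sign $\ep_{ni}\ep_{ij}\ep_{jn}$ equals $+1$, then the corresponding relation $t_it_j=-t_jt_i$ genuinely obstructs commutativity: this holds because $t_i^2=1$ makes each generator invertible, so $t_i,t_j\neq 0$ and $t_it_j=-t_jt_i$ cannot also satisfy $t_it_j=t_jt_i$. Hence commutativity of $C(A)$ forces $\ep_{ni}\ep_{ij}\ep_{jn}=-1$ for all $i<j$, which by \propref{prop.pc}(2) is exactly the condition $E=\bigcup_{i<j}\P(i,j)$, completing the equivalence in (3).
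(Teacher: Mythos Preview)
Your proposal is correct and follows essentially the same route as the paper: reduce to $C(A)$ via \thmref{thm.sv}, invoke \propref{prop.pc} to identify $C(A)$ with $C_+$ or $C_-$, recognize $C_+$ as a Clifford algebra and $C_-$ as $k^{2^{n-1}}$ (the paper phrases the latter as the group algebra of $(\Z_2)^{n-1}$, you use the Chinese Remainder Theorem---same content). The only difference is in the converse of (3): the paper simply asserts that $\Db(\mod C(A))\cong\Db(\mod k^{2^{n-1}})$ together with $\dim_k C(A)=2^{n-1}$ forces $C(A)\cong k^{2^{n-1}}$, while you supply the mechanism (rank of $K_0$ as a derived invariant plus a Wedderburn--Artin dimension count), which is a welcome clarification rather than a different argument.
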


\begin{proof}
Since $C_{+}$ is a Clifford algebra over $k$, it is known that
\begin{align} \label{eq.ci}
C_{+} \cong \begin{cases} M_{2^{(n-1)/2}}(k) &\text{if $n$ is odd},\\ M_{2^{(n-2)/2}}(k)^2 &\text{if $n$ is even}, \end{cases}
\end{align}
so
\[\mod C_{+} \cong \begin{cases}  \mod M_{2^{(n-1)/2}}(k) \cong \mod k &\text{if $n$ is odd},\\ \mod M_{2^{(n-2)/2}}(k)^2 \cong \mod k^2 &\text{if $n$ is even}. \end{cases} \]
Thus (1) and (2) follow from \thmref{thm.sv} and \propref{prop.pc} (1).

We next show (3). If $E = \bigcup_{1\leq i< j\leq n} \P(i,j)$, then $C(A) \cong C_{-}$ by \propref{prop.pc} (2).
Since $C_{-}$ is isomorphic to the group algebra of $(\Z_2)^{n-1}$ over $k$, we have $C_{-} \cong k^{2^{n-1}}$, so it follows that $\uCM(S/(f)) \cong \Db(\mod k^{2^{n-1}})$
by \thmref{thm.sv}.
Conversely, if $\uCM(S/(f)) \cong \Db(\mod k^{2^{n-1}})$, then $\Db(\mod C(A)) \cong \Db(\mod k^{2^{n-1}})$ by \thmref{thm.sv}.
Since $\dim_kC(A)=2^{n-1}$, it follows that $C(A) \cong k^{2^{n-1}} \cong C_{-}$.
Hence $E = \bigcup_{1\leq i< j\leq n} \P(i,j)$ by \propref{prop.pc} (2).
\end{proof}

Note that \thmref{thm.gst} (1), (2) recover \thmref{thm.Kn}, and \thmref{thm.gst} (3) shows that a new phenomenon appears in the noncommutative case.
We can now give an explicit classification of $\uCM(A)$ in the case $n\leq 3$
(the case $n=1$ is clear; see \thmref{thm.Kn} (1)).

\begin{cor}\label{cor.gst}
\begin{enumerate}
\item If $n=2$, then $E=\P^1$ and $\uCM(A) \cong \Db(\mod k^2)$.
\item If $n=3$, then
\[
\renewcommand{\arraystretch}{1.2}
\begin{array}{ll}
E=\P^2 &\Longleftrightarrow \uCM(A) \cong \Db(\mod k), \\
E=\P(2,3) \cup \P(1,3) \cup \P(1,2) &\Longleftrightarrow \uCM(A) \cong \Db(\mod k^4).
\end{array}
\]
\end{enumerate}
\end{cor}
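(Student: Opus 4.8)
The plan is to read this off directly from \thmref{thm.gst}, since for $n \leq 3$ the point scheme $E$ can only take one of the two special shapes already analyzed there.

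First I would dispose of $n=2$. When $n=2$ there is no triple $1 \leq i < j < k \leq n$, so the intersection in \thmref{thm.ps} ranges over the empty index set and hence $E = \P^{n-1} = \P^1$. Since $n=2$ is even and $E = \P^{n-1}$, \thmref{thm.gst} (2) immediately yields $\uCM(A) \cong \Db(\mod k^2)$, which is the assertion of (1).

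Next I would treat $n=3$, where the only relevant triple is $(i,j,k)=(1,2,3)$. By \thmref{thm.ps}, either $\ep_{12}\ep_{23}\ep_{31}=1$, in which case the defining intersection is again empty and $E = \P^2$, or $\ep_{12}\ep_{23}\ep_{31} \neq 1$, in which case $E = \cV(x_1x_2x_3) = \P(2,3)\cup\P(1,3)\cup\P(1,2)$; these are precisely the two configurations recorded just before \propref{prop.4ps}. In the first case $E = \P^{n-1}$ with $n=3$ odd, so \thmref{thm.gst} (1) gives $\uCM(A) \cong \Db(\mod k)$. In the second case I would note that for $n=3$ the union $\P(2,3)\cup\P(1,3)\cup\P(1,2)$ coincides with $\bigcup_{1\leq i<j\leq 3}\P(i,j)$, so \thmref{thm.gst} (3) applies and gives $\uCM(A)\cong\Db(\mod k^{2^{3-1}}) = \Db(\mod k^4)$. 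Because these two point schemes exhaust all possibilities for $n=3$ and the resulting categories $\Db(\mod k)$ and $\Db(\mod k^4)$ are distinct (they have a different number of simple objects), both stated equivalences follow at once.

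There is no substantive obstacle here: the entire content is already contained in \thmref{thm.gst}, and the corollary amounts to verifying that for $n \leq 3$ the point scheme is forced into one of the two cases handled by that theorem. The only points requiring a moment's bookkeeping are that the empty intersection in \thmref{thm.ps} yields all of $\P^{n-1}$ when $n=2$, and that the three-line configuration appearing for $n=3$ is exactly the general union $\bigcup_{1\leq i<j\leq n}\P(i,j)$ of \thmref{thm.gst} (3).
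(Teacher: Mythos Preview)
Your proposal is correct and follows exactly the approach of the paper, which simply cites \thmref{thm.ps} and \thmref{thm.gst}; you have merely spelled out the (straightforward) details of why, for $n\leq 3$, the point scheme necessarily falls into one of the two cases covered by \thmref{thm.gst}.
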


\begin{proof}
These follow from \thmref{thm.ps} and \thmref{thm.gst}.
\end{proof}

As we will see later, the converse of \thmref{thm.gst} (1), (2) does not hold in general.
So, in order to give a classification for the cases $n=4$ and $n=5$, we need a precise computation.

For a permutation $\si \in {\mathfrak S}_n$, we have an isomorphism
\[ S=k\ang{x_1, \dots, x_n}/(x_ix_j-\ep_{ij}x_jx_i) \xrightarrow[\vph]{\sim} k\ang{x_1, \dots, x_n}/(x_{\si(i)}x_{\si(j)}-\ep_{ij}x_{\si(j)}x_{\si(i)}) =:  S_\si\]
between graded ($\pm 1$)-skew polynomial algebras, which we call a permutation isomorphism.
Since $\vph$ preserves $f$, it induces an isomorphism
\begin{align*} \label{eq.pi}
A = S/(f) \xrightarrow{\sim} S_\si /(f),
\end{align*}
which we also call a permutation isomorphism.

\begin{lem} \label{lem.clp1}
If $n=4$, then, via a permutation isomorphism, $S$ is isomorphic to a graded ($\pm 1$)-skew polynomial algebra whose point scheme is one of the following:
\begin{enumerate}
\item[(4a)] $\P^3$;
\item[(4b)] $\P(1,2,4) \cup \P(1,2,3) \cup \P(3,4)$;
\item[(4c)] $\P(3,4) \cup \P(2,4) \cup \P(2,3) \cup \P(1,4) \cup \P(1,3) \cup \P(1,2)$.
\end{enumerate}
\end{lem}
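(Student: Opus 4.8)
The plan is to translate the point scheme into combinatorial data attached to the signs $\ep_{ij}$ and then analyze the finitely many resulting cases. By \thmref{thm.ps}, applied with $\al_{ij}=\ep_{ij}\in\{1,-1\}$, the condition $\al_{ij}\al_{jk}\al_{ki}\neq 1$ becomes $\ep_{ij}\ep_{jk}\ep_{ki}=-1$, so the point scheme is
\[ E=\bigcap_{\{i,j,k\}\in\cT}\cV(x_ix_jx_k), \]
where $\cT$ is the set of triples $\{i,j,k\}\subseteq\{1,2,3,4\}$ with $\ep_{ij}\ep_{jk}\ep_{ki}=-1$. A permutation isomorphism relabels the variables and hence replaces $\cT$ by its image under the corresponding element of ${\mathfrak S}_4$, while permuting the coordinate subspaces of $\P^3$ accordingly; so it suffices to compute $E$ for one representative of each ${\mathfrak S}_4$-orbit of possible $\cT$.

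First I would record the parity constraint, which is the key point. For $n=4$ each edge $\{i,j\}$ lies in exactly two of the four triples, so
\[ \prod_{\{i,j,k\}}\ep_{ij}\ep_{jk}\ep_{ki}=\prod_{\{i,j\}}\ep_{ij}^2=1, \]
whence an even number of the four triple-signs equal $-1$; that is, $|\cT|\in\{0,2,4\}$. This is exactly what rules out the remaining possibility of \propref{prop.4ps} (the one plane together with three lines), since that configuration forces $\cT$ to consist of the three triples through a common vertex, i.e. $|\cT|=3$, which is odd and therefore impossible in the $(\pm1)$-skew setting.

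Then I would treat the three surviving cases directly. If $|\cT|=0$ there are no constraints, so $E=\P^3$, which is (4a). If $|\cT|=4$, a point lies in $E$ iff every triple of its coordinates contains a zero, i.e. at most two of its coordinates are nonzero, giving $E=\bigcup_{1\le i<j\le 4}\P(i,j)$, which is (4c). If $|\cT|=2$, any two distinct triples of a $4$-set share an edge and ${\mathfrak S}_4$ acts transitively on unordered pairs of such triples, so after a permutation isomorphism I may take $\cT=\{\{1,3,4\},\{2,3,4\}\}$; distributing the intersection gives
\[ E=\cV(x_1x_3x_4)\cap\cV(x_2x_3x_4)=\cV(x_3)\cup\cV(x_4)\cup\bigl(\cV(x_1)\cap\cV(x_2)\bigr)=\P(1,2,4)\cup\P(1,2,3)\cup\P(3,4), \]
which is (4b). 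As these exhaust $|\cT|\in\{0,2,4\}$, the proof is complete.

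The hard part is conceptual rather than computational: it is recognizing that the $(\pm1)$-hypothesis imposes the parity constraint $|\cT|\equiv 0 \pmod 2$, which is precisely what eliminates one of the four point schemes permitted by \propref{prop.4ps} and leaves only (4a)--(4c). Once this is in hand, the case-by-case determination of $E$ is a routine distributive-law computation, and the role of the permutation isomorphism is merely to normalize the two-bad-triple case to the standard representative (4b).
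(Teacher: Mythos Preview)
Your proof is correct and complete, but it is organized differently from the paper's. The paper singles out the vertex $4$, records the three signs $\ep_{4i}\ep_{ij}\ep_{j4}$ for $\{i,j\}\subset\{1,2,3\}$ as a simple graph on three vertices, and reduces (via $\mathfrak S_3$) to the four graph-isomorphism classes (4\rnum1)--(4\rnum4); it then computes the point scheme in each case and observes that (4\rnum2) and (4\rnum3) are identified by a further permutation in $\mathfrak S_4$. You instead work with the full $\mathfrak S_4$-symmetry from the outset and replace the graph classification by the parity identity $\prod_{\{i,j,k\}}\ep_{ij}\ep_{jk}\ep_{ki}=1$, which immediately forces $|\cT|\in\{0,2,4\}$; the transitivity of $\mathfrak S_4$ on size-$2$ families of triples (via the bijection ``triple $\leftrightarrow$ complementary vertex'') then gives a single representative for the middle case without any subsequent merging. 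Your route is slicker for this lemma in isolation and makes the exclusion of the ``one plane plus three lines'' configuration from \propref{prop.4ps} transparent. The paper's route has the compensating advantage that the data $\ep_{4i}\ep_{ij}\ep_{j4}$ are exactly the structure constants of $C(A)$ in \lemref{lem.c}(3), so its case labels feed directly into the computations of \lemref{lem.clc} and \thmref{thm.clm}; your parametrization by $\cT$ would need to be translated back before that step.
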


\begin{proof}
First, via a permutation isomorphism, $S$ is isomorphic to one of the following:
\begin{enumerate}
\item[(4\rnum 1)] a graded ($\pm 1$)-skew polynomial algebra with
$\ep_{41}\ep_{12}\ep_{24}=\ep_{41}\ep_{13}\ep_{34}=\ep_{42}\ep_{23}\ep_{34}=1$;
\item[(4\rnum 2)] a graded ($\pm 1$)-skew polynomial algebra with
$\ep_{41}\ep_{12}\ep_{24}=\ep_{41}\ep_{13}\ep_{34}=1,\; \ep_{42}\ep_{23}\ep_{34}=-1$;
\item[(4\rnum 3)] a graded ($\pm 1$)-skew polynomial algebra with
$\ep_{41}\ep_{12}\ep_{24}=1,\; \ep_{41}\ep_{13}\ep_{34}=\ep_{42}\ep_{23}\ep_{34}=-1$;
\item[(4\rnum 4)] a graded ($\pm 1$)-skew polynomial algebra with
$\ep_{41}\ep_{12}\ep_{24}=\ep_{41}\ep_{13}\ep_{34}=\ep_{42}\ep_{23}\ep_{34}=-1$.
\end{enumerate}
Note that the above follows from (\ref{eq.3c}) and the classification of simple graphs of order $3$:
\begin{align*}
\xymatrix@C=1pc@R=1pc{
3 &1 \\2
}
\qquad
\xymatrix@C=1pc@R=1pc{
3 \ar@{-}[d] &1 \\2
}
\qquad
\xymatrix@C=1pc@R=1pc{
3 \ar@{-}[d] \ar@{-}[r] &1 \\2
}
\qquad
\xymatrix@C=1pc@R=1pc{
3 \ar@{-}[d] \ar@{-}[r] &1 \ar@{-}[ld] \\2
}
\end{align*}
(we define $\ep_{4i}\ep_{ij}\ep_{j4}=-1$ if $\{i,j\}$ is an edge in the graph, and $\ep_{4i}\ep_{ij}\ep_{j4}=1$ otherwise).

The point scheme of an algebra in the case (4\rnum 1) is $\P^3$, so this is (4a).

The point scheme of an algebra in the case (4\rnum 3) is 
$$\cV(x_1x_3x_4) \cap \cV(x_2x_3x_4) = \cV(x_3) \cup \cV(x_4) \cup \cV(x_1, x_2),$$
so this is (4b).
The point scheme of an algebra in the case (4\rnum 2) is $\cV(x_1x_2x_3) \cap \cV(x_2x_3x_4) = \cV(x_2) \cup \cV(x_3) \cup \cV(x_1, x_4)$, so
an algebra in the case (4\rnum 2) is isomorphic to an algebra in the case (4\rnum 3) via the permutation isomorphism induced by
$\si = \left(\begin{smallmatrix} 1 &2 &3 &4 \\ 1 &4 &3 &2 \end{smallmatrix}\right)$.

The point scheme of an algebra in the case (4\rnum 4) is $\bigcap_{1\leq i<j<k\leq 4}\cV(x_ix_jx_k) = \bigcup_{1\leq i<j\leq 4} \cV(x_i, x_j)$, so this is (4c).
\end{proof}

\begin{rem}
It follows from \lemref{lem.clp1} that not every point scheme in \propref{prop.4ps} appears as the point scheme of a graded ($\pm 1$)-skew polynomial algebra.
\end{rem}

\begin{lem} \label{lem.clp2}
If $n=5$, then, via a permutation isomorphism, $S$ is isomorphic to a graded ($\pm 1$)-skew polynomial algebra whose point scheme is one of the following:
\begin{enumerate}
\item[(5a)] $\P^4$;
\item[(5b)] $\P(1,2,3,5) \cup \P(1,2,3,4) \cup \P(4,5)$;
\item[(5c)] $\P(1,2,3,4) \cup \P(3,4,5) \cup \P(1,2,5)$;
\item[(5d)] $\P(3,4,5) \cup \P(1,4,5) \cup \P(1,2,5) \cup \P(1,2,3) \cup \P(2,3,4)$;
\item[(5e)] $\P(1,3,5) \cup \P(1,3,4) \cup \P(1,2,5) \cup \P(1,2,4) \cup \P(4,5) \cup \P(2,3)$;
\item[(5f)] $\P(1,2,5) \cup \P(1,2,4) \cup \P(1,2,3) \cup \P(4,5) \cup \P(3,5) \cup \P(3,4)$;
\item[(5g)] $\P(4,5) \cup \P(3,5) \cup \P(3,4) \cup \P(2,5) \cup \P(2,4) \cup \P(2,3)\cup \P(1,5) \cup \P(1,4) \cup \P(1,3) \cup \P(1,2)$.
\end{enumerate}
\end{lem}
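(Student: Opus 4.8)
The plan is to follow the strategy of \lemref{lem.clp1}, taking $x_5$ as a distinguished ``hub'' variable. For $1\le i<j\le 4$ I declare $\{i,j\}$ to be an edge of a simple graph $G$ on the vertex set $\{1,2,3,4\}$ precisely when $\ep_{5i}\ep_{ij}\ep_{j5}=-1$. By \eqref{eq.3c} every triple product is then read off from $G$: for $1\le i<j\le 4$ the product $\ep_{ij}\ep_{j5}\ep_{5i}$ equals $-1$ iff $\{i,j\}$ is an edge, while for $1\le i<j<k\le 4$ the product $\ep_{ij}\ep_{jk}\ep_{ki}$ equals $-1$ iff the triangle $\{i,j,k\}$ contains an odd number of edges of $G$. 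Consequently, by \thmref{thm.ps}, the point scheme $E$ is determined by $G$: it is the intersection $\bigcap\cV(x_ix_jx_k)$ taken over the ``odd'' triples just described.

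First I would reduce, via a permutation isomorphism fixing $5$ (which acts on $G$ as an ordinary graph isomorphism of $\{1,2,3,4\}$), to one of the $11$ isomorphism types of simple graphs of order $4$: the empty graph, a single edge, the two two-edge graphs (a matching and a path), the three three-edge graphs (a triangle, a path, and a star), the two four-edge graphs (a $4$-cycle and a \emph{paw}), the graph $K_4$ minus an edge, and $K_4$. For each representative I would list the odd triples and compute $E=\bigcap\cV(x_ix_jx_k)$ by finding the maximal coordinate subspaces it contains; equivalently, the irreducible components correspond to the minimal subsets $Z\subseteq\{1,\dots,5\}$ meeting every odd triple, the component attached to $Z$ being $\cV(x_i:i\in Z)$, i.e.\ the $\P(\cdots)$ on the complementary variables. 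This step is routine but is the bulk of the bookkeeping; for instance the empty graph gives $E=\P^4$ and $K_4$ (all triple products $=-1$) gives $E=\bigcup_{1\le i<j\le 5}\P(i,j)$ by \propref{prop.pc}, while a single edge gives two hyperplanes and a line.

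The eleven resulting point schemes are not all distinct up to permutation, and the crucial step is to merge them to the seven listed types. The collapse is caused exactly by the permutation isomorphisms that do \emph{not} fix $5$: moving the hub alters $G$ by a switching-type operation interchanging the ``edge'' and ``triangle-parity'' data, so that several $\mathfrak S_4$-inequivalent graphs define permutation-isomorphic algebras, just as the cases $(4\rnum 2)$ and $(4\rnum 3)$ were identified in \lemref{lem.clp1}. I expect the pairings to be the single edge with the star $(\to(5b))$, the two-edge path with the $4$-cycle $(\to(5c))$, the matching with the paw $(\to(5e))$, and the triangle with $K_4$ minus an edge $(\to(5f))$, while the empty graph, the three-edge path, and $K_4$ give $(5a)$, $(5d)$, $(5g)$. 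The hard part is therefore producing, for each pair, an explicit $\si\in{\mathfrak S}_5$ realizing the identification and checking that the two schemes coincide after relabeling. What makes this manageable is that the number of odd triples is a permutation invariant that takes the seven \emph{distinct} values $0,3,4,5,6,7,10$ across the classes (the intermediate values being excluded because, by \eqref{eq.3c}, the odd triples form a set that is even on every four-element subset); recording this count, together with how many odd triples contain each fixed vertex, separates the seven cases and pins down the four coincidences, after which writing down the relevant $\si$ is a finite check.
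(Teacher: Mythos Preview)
Your plan is correct and matches the paper's proof almost exactly: the paper also fixes $x_5$ as the hub, encodes $S$ by a simple graph on $\{1,2,3,4\}$, reduces via $\mathfrak S_4$ to the eleven isomorphism types, computes each point scheme from \thmref{thm.ps}, and then uses explicit permutations $\si\in\mathfrak S_5$ not fixing $5$ to merge precisely the four pairs you predict (single edge $\leftrightarrow$ star, $P_3\leftrightarrow C_4$, matching $\leftrightarrow$ paw, triangle $\leftrightarrow K_4$ minus an edge). Your additional invariant --- the total number of odd triples, which by the parity relation on $4$-subsets takes the values $0,3,4,5,6,7,10$ --- is a nice organizing device that the paper does not state explicitly, but it does not shortcut the final step: you still have to exhibit the four permutations (or, equivalently, match the computed point schemes after relabeling), exactly as the paper does.
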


\begin{proof}
First, via a permutation isomorphism, $S$ is isomorphic to one of the following:
\begin{enumerate}
\item[(5\rnum 1)] a graded ($\pm 1$)-skew polynomial algebra with
\[\ep_{51}\ep_{12}\ep_{25}=1,\; \ep_{51}\ep_{13}\ep_{35}=1,\; \ep_{51}\ep_{14}\ep_{45}=1,\;
  \ep_{52}\ep_{23}\ep_{35}=1,\; \ep_{52}\ep_{24}\ep_{45}=1,\; \ep_{53}\ep_{34}\ep_{45}=1;  \]
\item[(5\rnum 2)] a graded ($\pm 1$)-skew polynomial algebra with
\[\ep_{51}\ep_{12}\ep_{25}=1,\; \ep_{51}\ep_{13}\ep_{35}=1,\; \ep_{51}\ep_{14}\ep_{45}=1,\;
  \ep_{52}\ep_{23}\ep_{35}=1,\; \ep_{52}\ep_{24}\ep_{45}=1,\; \ep_{53}\ep_{34}\ep_{45}=-1;  \]
\item[(5\rnum 3)] a graded ($\pm 1$)-skew polynomial algebra with
\[\ep_{51}\ep_{12}\ep_{25}=1,\; \ep_{51}\ep_{13}\ep_{35}=1,\; \ep_{51}\ep_{14}\ep_{45}=1,\;
  \ep_{52}\ep_{23}\ep_{35}=1,\; \ep_{52}\ep_{24}\ep_{45}=-1,\; \ep_{53}\ep_{34}\ep_{45}=-1;  \]
\item[(5\rnum 4)] a graded ($\pm 1$)-skew polynomial algebra with
\[\ep_{51}\ep_{12}\ep_{25}=-1,\; \ep_{51}\ep_{13}\ep_{35}=1,\; \ep_{51}\ep_{14}\ep_{45}=1,\;
  \ep_{52}\ep_{23}\ep_{35}=1,\; \ep_{52}\ep_{24}\ep_{45}=1,\; \ep_{53}\ep_{34}\ep_{45}=-1;  \]
\item[(5\rnum 5)] a graded ($\pm 1$)-skew polynomial algebra with
\[\ep_{51}\ep_{12}\ep_{25}=1,\; \ep_{51}\ep_{13}\ep_{35}=1,\; \ep_{51}\ep_{14}\ep_{45}=-1,\;
  \ep_{52}\ep_{23}\ep_{35}=1,\; \ep_{52}\ep_{24}\ep_{45}=-1,\; \ep_{53}\ep_{34}\ep_{45}=-1;  \]
\item[(5\rnum 6)] a graded ($\pm 1$)-skew polynomial algebra with
\[\ep_{51}\ep_{12}\ep_{25}=1,\; \ep_{51}\ep_{13}\ep_{35}=-1,\; \ep_{51}\ep_{14}\ep_{45}=1,\;
  \ep_{52}\ep_{23}\ep_{35}=-1,\; \ep_{52}\ep_{24}\ep_{45}=-1,\; \ep_{53}\ep_{34}\ep_{45}=1;  \]
\item[(5\rnum 7)] a graded ($\pm 1$)-skew polynomial algebra with
\[\ep_{51}\ep_{12}\ep_{25}=1,\; \ep_{51}\ep_{13}\ep_{35}=-1,\; \ep_{51}\ep_{14}\ep_{45}=-1,\;
  \ep_{52}\ep_{23}\ep_{35}=1,\; \ep_{52}\ep_{24}\ep_{45}=1,\; \ep_{53}\ep_{34}\ep_{45}=-1;  \]
\item[(5\rnum 8)] a graded ($\pm 1$)-skew polynomial algebra with
\[\ep_{51}\ep_{12}\ep_{25}=1,\; \ep_{51}\ep_{13}\ep_{35}=-1,\; \ep_{51}\ep_{14}\ep_{45}=-1,\;
  \ep_{52}\ep_{23}\ep_{35}=-1,\; \ep_{52}\ep_{24}\ep_{45}=-1,\; \ep_{53}\ep_{34}\ep_{45}=1;  \]
\item[(5\rnum 9)] a graded ($\pm 1$)-skew polynomial algebra with
\[\ep_{51}\ep_{12}\ep_{25}=1,\; \ep_{51}\ep_{13}\ep_{35}=1,\; \ep_{51}\ep_{14}\ep_{45}=-1,\;
  \ep_{52}\ep_{23}\ep_{35}=-1,\; \ep_{52}\ep_{24}\ep_{45}=-1,\; \ep_{53}\ep_{34}\ep_{45}=-1;  \]
\item[(5\rnum {10})] a graded ($\pm 1$)-skew polynomial algebra with
\[\ep_{51}\ep_{12}\ep_{25}=1,\; \ep_{51}\ep_{13}\ep_{35}=-1,\; \ep_{51}\ep_{14}\ep_{45}=-1,\;
  \ep_{52}\ep_{23}\ep_{35}=-1,\; \ep_{52}\ep_{24}\ep_{45}=-1,\; \ep_{53}\ep_{34}\ep_{45}=-1;  \]
\item[(5\rnum {11})] a graded ($\pm 1$)-skew polynomial algebra with
\[\ep_{51}\ep_{12}\ep_{25}=-1,\; \ep_{51}\ep_{13}\ep_{35}=-1,\; \ep_{51}\ep_{14}\ep_{45}=-1,\;
  \ep_{52}\ep_{23}\ep_{35}=-1,\; \ep_{52}\ep_{24}\ep_{45}=-1,\; \ep_{53}\ep_{34}\ep_{45}=-1;  \]
\end{enumerate}
Note that the above follows from (\ref{eq.3c}) and the classification of simple graphs of order $4$:
\begin{align*}
\xymatrix@C=1pc@R=1pc{
4 &1 \\3 &2
}
\qquad
\xymatrix@C=1pc@R=1pc{
4 \ar@{-}[d] &1 \\3 &2
}
\qquad
\xymatrix@C=1pc@R=1pc{
4 \ar@{-}[d] \ar@{-}[rd] &1 \\3 &2
}
\qquad
\xymatrix@C=1pc@R=1pc{
4 \ar@{-}[d] &1\ar@{-}[d]  \\3  &2
}
\qquad
\xymatrix@C=1pc@R=1pc{
4 \ar@{-}[d] \ar@{-}[rd] \ar@{-}[r] &1 \\3 &2
}
\qquad
\xymatrix@C=1pc@R=1pc{
4 \ar@{-}[rd] &1\ar@{-}[ld] \\3 \ar@{-}[r]&2
}
\\
\xymatrix@C=1pc@R=1pc{
4 \ar@{-}[d] \ar@{-}[r] &1 \ar@{-}[ld] \\3 &2
}
\qquad
\xymatrix@C=1pc@R=1pc{
4 \ar@{-}[rd] \ar@{-}[r] &1\ar@{-}[ld] \\3 \ar@{-}[r]&2
}
\qquad
\xymatrix@C=1pc@R=1pc{
4 \ar@{-}[rd] \ar@{-}[r] \ar@{-}[d]&1 \\3 \ar@{-}[r]&2
}
\qquad
\xymatrix@C=1pc@R=1pc{
4 \ar@{-}[rd] \ar@{-}[r] \ar@{-}[d]&1 \ar@{-}[ld] \\3 \ar@{-}[r]&2
}
\qquad
\xymatrix@C=1pc@R=1pc{
4 \ar@{-}[rd] \ar@{-}[r] \ar@{-}[d]&1 \ar@{-}[ld] \ar@{-}[d] \\3 \ar@{-}[r]&2
}
\qquad\;\;
\end{align*}
(we define $\ep_{5i}\ep_{ij}\ep_{j5}=-1$ if $\{i,j\}$ is an edge in the graph, and $\ep_{5i}\ep_{ij}\ep_{j5}=1$ otherwise).

The point scheme of an algebra in the case (5\rnum 1) is $\P^4$, so this is (5a).

The point scheme of an algebra in the case (5\rnum 5) is 
$$\cV(x_1x_4x_5) \cap \cV(x_2x_4x_5) \cap \cV(x_3x_4x_5) = \cV(x_4) \cup \cV(x_5) \cup \cV(x_1, x_2, x_3),$$
so this is (5b).
The point scheme of an algebra in the case (5\rnum 2) is $\cV(x_1x_3x_4) \cap \cV(x_2x_3x_4) \cap \cV(x_3x_4x_5) = \cV(x_3) \cup \cV(x_4) \cup \cV(x_1, x_2, x_5)$, so
an algebra in the case (5\rnum 2) is isomorphic to an algebra in the case (5\rnum 5) via the permutation isomorphism induced by
$\si = \left(\begin{smallmatrix} 1 &2 &3 &4 &5 \\ 1 &2 &5 &4 &3 \end{smallmatrix}\right)$.

The point scheme of an algebra in the case (5\rnum 8) is
$$\cV(x_1x_3x_5) \cap \cV(x_1x_4x_5) \cap \cV(x_2x_3x_5) \cap \cV(x_2x_4x_5) = \cV(x_5) \cup \cV(x_1, x_2) \cup \cV(x_3, x_4),$$
so this is (5c).
The point scheme of an algebra in the case (5\rnum 3) is $\cV(x_1x_2x_4) \cap \cV(x_1x_3x_4) \cap \cV(x_2x_4x_5) \cap \cV(x_3x_4x_5) = \cV(x_4) \cup \cV(x_1, x_5) \cup \cV(x_2, x_3)$, so
an algebra in the case (5\rnum 3) is isomorphic to an algebra in the case (5\rnum 8) via the permutation isomorphism induced by
$\si = \left(\begin{smallmatrix} 1 &2 &3 &4 &5 \\ 1 &4 &3 &5 &2 \end{smallmatrix}\right)$.

The point scheme of an algebra in the case (5\rnum 6) is
\begin{align*}
&\cV(x_1x_2x_4) \cap \cV(x_1x_3x_4) \cap \cV(x_1x_3x_5) \cap \cV(x_2x_3x_5) \cap \cV(x_2x_4x_5)\\
&= \cV(x_1, x_2) \cup \cV(x_2, x_3) \cup \cV(x_3, x_4)\cup \cV(x_4, x_5) \cup \cV(x_5, x_1),
\end{align*}
so this is (5d).

The point scheme of an algebra in the case (5\rnum 9) is
\begin{align*}
&\cV(x_1x_2x_3) \cap \cV(x_1x_4x_5) \cap \cV(x_2x_3x_4) \cap \cV(x_2x_3x_5) \cap \cV(x_2x_4x_5) \cap \cV(x_3x_4x_5)\\
&= \cV(x_2, x_4) \cup \cV(x_2, x_5) \cup \cV(x_3, x_4)\cup \cV(x_3, x_5) \cup \cV(x_1, x_2, x_3) \cup \cV(x_1, x_4, x_5),
\end{align*}
so this is (5e).
The point scheme of an algebra in the case (5\rnum 4) is $\cV(x_1x_2x_3) \cap \cV(x_1x_2x_4) \cap \cV(x_1x_2x_5) \cap \cV(x_1x_3x_4) \cap \cV(x_2x_3x_4) \cap \cV(x_3x_4x_5)
 = \cV(x_1, x_3) \cup \cV(x_1, x_4) \cup \cV(x_2, x_3)\cup \cV(x_2, x_4) \cup \cV(x_1, x_2, x_5) \cup \cV(x_3, x_4, x_5)$, so
an algebra in the case (5\rnum 4) is isomorphic to an algebra in the case (5\rnum 9) via the permutation isomorphism induced by
$\si = \left(\begin{smallmatrix} 1 &2 &3 &4 &5 \\ 3 &2 &5 &4 &1 \end{smallmatrix}\right)$.

The point scheme of an algebra in the case (5\rnum{10}) is
\begin{align*}
&\cV(x_1x_3x_4) \cap \cV(x_1x_3x_5) \cap \cV(x_1x_4x_5) \cap \cV(x_2x_3x_4) \cap \cV(x_2x_3x_5) \cap \cV(x_2x_4x_5) \cap \cV(x_3x_4x_5)\\
&= \cV(x_3, x_4) \cup \cV(x_3, x_5) \cup \cV(x_4, x_5)\cup \cV(x_1, x_2, x_3) \cup \cV(x_1, x_2, x_4) \cup \cV(x_1, x_2, x_5),
\end{align*}
so this is (5f).
The point scheme of an algebra in the case (5\rnum 7) is $\cV(x_1x_2x_3) \cap \cV(x_1x_2x_4) \cap \cV(x_1x_3x_4) \cap \cV(x_1x_3x_5) \cap \cV(x_1x_4x_5) \cap \cV(x_2x_3x_4) \cap \cV(x_3x_4x_5)
 = \cV(x_1, x_3) \cup \cV(x_1, x_4) \cup \cV(x_3, x_4)\cup \cV(x_1, x_2, x_5) \cup \cV(x_2, x_3, x_5) \cup \cV(x_2, x_4, x_5)$, so
an algebra in the case (5\rnum 7) is isomorphic to an algebra in the case (5\rnum{10}) via the permutation isomorphism induced by
$\si = \left(\begin{smallmatrix} 1 &2 &3 &4 &5 \\ 5 &2 &3 &4 &1 \end{smallmatrix}\right)$.

The point scheme of an algebra in the case (5\rnum {11}) is $\bigcap_{1\leq i<j<k\leq 5}\cV(x_ix_jx_k) = \bigcup_{1\leq i<j<k\leq 5} \cV(x_i, x_j, x_k)$, so this is (5g).
\end{proof}

To describe the algebras $C(A)$ appearing in \lemref{lem.c}, we show that the following algebras are isomorphic to algebras of the form $M_{i}(k)^{j}$.

\begin{lem} \label{lem.clc}
\begin{enumerate}
\item $C_{\rm \rnum 1}:= k\ang{t_1, t_2, t_3}/(t_1t_2+t_2t_1, t_1t_3+t_3t_1, t_2t_3-t_3t_2, t_1^2-1, t_2^2-1, t_3^2-1)$ is isomorphic to $M_2(k)^2$.
\item $C_{\rm \rnum 2}:= k\ang{t_1, t_2, t_3}/(t_1t_2+t_2t_1, t_1t_3-t_3t_1, t_2t_3-t_3t_2, t_1^2-1, t_2^2-1, t_3^2-1)$ is isomorphic to $M_2(k)^2$.
\item $C_{\rm \rnum 3}:= k\ang{t_1, t_2, t_3, t_4}/(t_1t_2+t_2t_1, t_1t_3+t_3t_1, t_1t_4-t_4t_1, t_2t_3+t_3t_2, t_2t_4-t_4t_2, t_3t_4-t_4t_3, t_1^2-1, t_2^2-1, t_3^2-1,t_4^2-1)$ is isomorphic to $M_2(k)^4$.
\item $C_{\rm \rnum 4}:= k\ang{t_1, t_2, t_3, t_4}/(t_1t_2+t_2t_1, t_1t_3-t_3t_1, t_1t_4-t_4t_1, t_2t_3-t_3t_2, t_2t_4-t_4t_2, t_3t_4+t_4t_3, t_1^2-1, t_2^2-1, t_3^2-1,t_4^2-1)$ is isomorphic to $M_4(k)$.
\item $C_{\rm \rnum 5}:= k\ang{t_1, t_2, t_3, t_4}/(t_1t_2+t_2t_1, t_1t_3-t_3t_1, t_1t_4+t_4t_1, t_2t_3-t_3t_2, t_2t_4-t_4t_2, t_3t_4+t_4t_3, t_1^2-1, t_2^2-1, t_3^2-1,t_4^2-1)$ is isomorphic to $M_4(k)$.
\item $C_{\rm \rnum 6}:= k\ang{t_1, t_2, t_3, t_4}/(t_1t_2+t_2t_1, t_1t_3+t_3t_1, t_1t_4-t_4t_1, t_2t_3-t_3t_2, t_2t_4-t_4t_2, t_3t_4-t_4t_3, t_1^2-1, t_2^2-1, t_3^2-1,t_4^2-1)$ is isomorphic to $M_2(k)^4$.
\item $C_{\rm \rnum 7}:= k\ang{t_1, t_2, t_3, t_4}/(t_1t_2+t_2t_1, t_1t_3-t_3t_1, t_1t_4-t_4t_1, t_2t_3-t_3t_2, t_2t_4-t_4t_2, t_3t_4-t_4t_3, t_1^2-1, t_2^2-1, t_3^2-1,t_4^2-1)$ is isomorphic to $M_2(k)^4$.
\end{enumerate}
\end{lem}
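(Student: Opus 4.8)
The plan is to treat all seven algebras uniformly by recognizing each $C_{\rm x}$ as a twisted group algebra of $(\Z_2)^m$ (equivalently, a generalized Clifford algebra) and reading off its Wedderburn decomposition from a single linear-algebra invariant over $\mathbb{F}_2$.

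First I would record the two building blocks over $k$: the algebra $k\ang{u,v}/(uv+vu,\,u^2-1,\,v^2-1)$ is $4$-dimensional with center $k$ and is semisimple (as $\fchar k=0$), hence $\cong M_2(k)$ --- an explicit isomorphism sends $u,v$ to the Pauli-type matrices $\mathrm{diag}(1,-1)$ and $\left(\begin{smallmatrix}0&1\\1&0\end{smallmatrix}\right)$ --- while $k[t]/(t^2-1)\cong k\times k$. Together with $M_a(k)\otimes_k M_b(k)\cong M_{ab}(k)$ and $(k\times k)\otimes_k M_a(k)\cong M_a(k)^2$, these let me assemble any tensor product of such pieces.

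Next I would prove the general statement: if $\Lambda=k\ang{t_1,\dots,t_m}/(t_it_j-\la_{ij}t_jt_i,\,t_i^2-1)$ with $\la_{ij}=\la_{ji}\in\{\pm1\}$ and $\la_{ii}=1$, and $B=(b_{ij})\in M_m(\mathbb{F}_2)$ is the symmetric zero-diagonal matrix with $\la_{ij}=(-1)^{b_{ij}}$ (an alternating form $\beta$ on $\mathbb{F}_2^m$), then $\Lambda\cong M_{2^r}(k)^{2^{m-2r}}$, where $2r=\rank_{\mathbb{F}_2}B$. For the proof, writing $t^a:=t_1^{a_1}\cdots t_m^{a_m}$ for $a\in\mathbb{F}_2^m$ gives a $k$-basis of $\Lambda$, and a direct reordering computation (using $t_i^2=1$) yields $t^at^b=(-1)^{\beta(a,b)}t^bt^a$ and $(t^a)^2\in\{\pm1\}$, with the sign an explicit function of $a$ and $B$. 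Choosing a symplectic basis $e_1,f_1,\dots,e_r,f_r,g_1,\dots,g_{m-2r}$ of $(\mathbb{F}_2^m,\beta)$ (hyperbolic pairs $\beta(e_i,f_i)=1$, all other pairings zero, with the $g_l$ spanning the radical), the corresponding monomials generate $\Lambda$, anticommute within each pair $(t^{e_i},t^{f_i})$, commute otherwise, and the radical monomials $t^{g_l}$ are central. Thus $\Lambda$ is generated by $r$ mutually commuting copies of the first building block together with a central commutative part, and comparing dimensions ($2^{2r}\cdot 2^{m-2r}=2^m$) gives $\Lambda\cong M_2(k)^{\otimes r}\otimes(k\times k)^{\otimes(m-2r)}\cong M_{2^r}(k)^{2^{m-2r}}$.

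Finally I would dispatch the seven cases by computing the $\mathbb{F}_2$-rank of $B$: for $C_{\rm I}$ and $C_{\rm II}$ ($m=3$) the rank is $2$, giving $M_2(k)^2$; for $C_{\rm III}$, $C_{\rm VI}$, $C_{\rm VII}$ ($m=4$) the rank is $2$, giving $M_2(k)^4$; and for $C_{\rm IV}$, $C_{\rm V}$ ($m=4$) the rank is $4$, giving $M_4(k)$. Each rank is a one-line computation over $\mathbb{F}_2$. The main obstacle is the square-normalization in the general step: a symplectic basis vector $t^a$ need not satisfy $(t^a)^2=1$, only $(t^a)^2=\pm1$, so one must rescale $t^a\mapsto\sqrt{-1}\,t^a$ whenever the square is $-1$ --- this is exactly where algebraic closedness (and $\fchar k\neq2$) is used --- and then check that these rescalings, being by central scalars, preserve all the commutation relations so that the claimed internal tensor decomposition is genuine.
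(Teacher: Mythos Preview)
Your proposal is correct and takes a genuinely different route from the paper. The paper proceeds case by case: in (2), (3), (4), (6), (7) it spots a generator that commutes with the others (or a pair that commutes with the remaining pair) and splits off a tensor factor by hand, reducing to the Clifford identity (\ref{eq.ci}); in the two cases (1) and (5) where no such obvious splitting exists, it writes down explicit complete sets of orthogonal idempotents and an explicit matrix-unit isomorphism. Your argument instead proves once and for all that any algebra of this shape is $M_{2^r}(k)^{2^{m-2r}}$ with $2r$ the $\mathbb{F}_2$-rank of the anticommutation matrix, via a symplectic basis for the associated alternating form on $\mathbb{F}_2^m$, and then reads off all seven cases from one-line rank computations. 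What your approach buys is uniformity and scalability (it would handle any $n$ without new work, and in fact immediately implies \thmref{thm.gst} as well, since $C_+$ has $B$ of full even rank and $C_-$ has $B=0$); what the paper's approach buys is concreteness, since it never needs the symplectic normal form or the observation that the monomials $t^a$ form a projective representation of $(\Z_2)^m$, and the explicit idempotents in (1) and (5) give matrix units one can compute with directly. The one point worth tightening in your write-up is the passage from ``these subalgebras commute and generate'' to the tensor decomposition: you should state explicitly that the multiplication map from the external tensor product is an algebra homomorphism (because the factors commute), is surjective (because the symplectic basis vectors span $\mathbb{F}_2^m$, so their monomials generate all $t^a$ up to sign), and is therefore an isomorphism by the dimension count---this is routine but is the only place where the argument is not purely formal.
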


\begin{proof}
(1) Let
\begin{align*}
e_1 = \frac{1}{4}(1+t_2+t_3+t_2t_3),\quad
e_2 = \frac{1}{4}(1-t_2+t_3-t_2t_3),\\
e_3 = \frac{1}{4}(1+t_2-t_3-t_2t_3),\quad
e_4 = \frac{1}{4}(1-t_2-t_3+t_2t_3).
\end{align*}
Then they form a complete set of orthogonal idempotents of $C_{\rm \rnum 1}$. Since
\begin{align*}
&e_1t_1 = \frac{1}{4}(1+t_2+t_3+t_2t_3)t_1=\frac{1}{4}t_2(1-t_1-t_3+t_2t_3)=t_1e_4,\\
&e_2t_1 = \frac{1}{4}(1-t_2+t_3-t_2t_3)t_1=\frac{1}{4}t_2(1+t_1-t_3-t_2t_3)=t_1e_3,\\
&e_3t_1 = \frac{1}{4}(1+t_2-t_3-t_2t_3)t_1=\frac{1}{4}t_2(1-t_1+t_3-t_2t_3)=t_1e_2,\\
&e_4t_1 = \frac{1}{4}(1-t_2-t_3+t_2t_3)t_1=\frac{1}{4}t_2(1+t_1+t_3+t_2t_3)=t_1e_1,
\end{align*}
it follows that the map $M_2(k)^2 \to C_{\rm \rnum 1};$
\[ \left(\begin{pmatrix} a_{11} &a_{12} \\ a_{21} &a_{22} \end{pmatrix}, \begin{pmatrix} b_{11} &b_{12} \\ b_{21} &b_{22} \end{pmatrix}\right) \mapsto 
\begin{matrix*}[l] \;\;\; a_{11}e_1&+a_{12}e_1t_1e_4 &+b_{11}e_2 &+b_{12}e_2t_1e_3 \\
+a_{21}e_4t_1e_1 &+a_{22}e_4 &+b_{21}e_3t_1e_2 &+ b_{22}e_3\end{matrix*}\]
is an isomorphism of algebras.

(2) Since $t_3$ commutes with $t_1, t_2$ in $C_{\rm \rnum 2}$, we have 
\[C_{\rm \rnum 2} \cong k\ang{t_1, t_2}/(t_1t_2+t_2t_1, t_1^2-1, t_2^2-1) \otimes_k k[t_3]/(t_3^2-1) \cong M_2(k) \otimes_k k^2 \cong M_2(k)^2\]
by (\ref{eq.ci}).

(3) Since $t_4$ commutes with $t_1, t_2, t_3$ in $C_{\rm \rnum 3}$, we have 
\begin{align*}
C_{\rm \rnum 3} &\cong k\ang{t_1, t_2, t_3}/(t_1t_2+t_2t_1, t_1t_3+t_3t_1, t_2t_3+t_3t_2, t_1^2-1, t_2^2-1, t_3^2-1) \otimes_k k[t_4]/(t_4^2-1)\\
&\cong M_2(k)^2 \otimes_k k^2 \cong M_2(k)^4
\end{align*}
by (\ref{eq.ci}).

(4) Since $t_3, t_4$ commute with $t_1, t_2$ in $C_{\rm \rnum 4}$, we have
\begin{align*}
C_{\rm \rnum 4} &\cong k\ang{t_1, t_2}/(t_1t_2+t_2t_1, t_1^2-1, t_2^2-1) \otimes_k k\ang{t_3, t_4}/(t_3t_4+t_4t_3, t_3^2-1, t_4^2-1)\\
&\cong M_2(k) \otimes_k M_2(k) \cong M_4(k)
\end{align*}
by (\ref{eq.ci}).

(5) Let
\begin{align*}
e_1 = \frac{1}{4}(1+t_1+t_3+t_1t_3),\quad
e_2 = \frac{1}{4}(1-t_1+t_3-t_1t_3),\\
e_3 = \frac{1}{4}(1+t_1-t_3-t_1t_3),\quad
e_4 = \frac{1}{4}(1-t_1-t_3+t_1t_3).
\end{align*}
Then they form a complete set of orthogonal idempotents of $C_{\rm \rnum 5}$. Similar to the proof of (1), we have
\begin{align*}
&e_1t_4 =t_4e_4, \qquad e_1t_2=t_2e_2, \qquad e_1t_4t_2 =t_4t_2e_3,\\
&e_2t_4 =t_4e_3, \qquad e_2t_2=t_2e_1, \qquad e_2t_4t_2 =t_4t_2e_4,\\
&e_3t_4 =t_4e_2, \qquad e_3t_2=t_2e_4, \qquad e_3t_4t_2 =t_4t_2e_1,\\
&e_4t_4 =t_4e_1, \qquad e_4t_2=t_2e_3, \qquad e_4t_4t_2 =t_4t_2e_2,
\end{align*}
so it follows that the map $M_4(k) \to C_{\rm \rnum 5};$
\[ \begin{pmatrix} a_{11} &a_{12} &a_{13} &a_{14} \\ a_{21} &a_{22} &a_{23} &a_{24} \\ a_{31} &a_{32} &a_{33} &a_{34} \\ a_{41} &a_{42} &a_{43} &a_{44} \end{pmatrix} \mapsto 
\begin{matrix*}[l]
\;\;\; a_{11}e_1&+a_{12}e_1t_4e_4 &+a_{13}e_1t_2e_2 &+a_{14}e_1t_4t_2e_3 \\
+a_{21}e_4t_4e_1 &+a_{22}e_4      &+a_{23}e_4t_4t_2e_2 &+a_{24}e_4t_2e_3 \\
+a_{31}e_2t_2e_1 &+a_{32}e_2t_4t_2e_4  &+a_{33}e_2&+a_{34}e_2t_4e_3\\
+a_{41}e_3t_4t_2e_1 &+a_{42}e_3t_2e_4  &+a_{43}e_3t_4e_2 &+a_{44}e_3
\end{matrix*}\]
is an isomorphism of algebras.

(6) Since $t_4$ commutes with $t_1, t_2, t_3$ in $C_{\rm \rnum 6}$, we have 
\begin{align*}
C_{\rm \rnum 6} &\cong k\ang{t_1, t_2, t_3}/(t_1t_2+t_2t_1, t_1t_3+t_3t_1, t_2t_3-t_3t_2, t_1^2-1, t_2^2-1, t_3^2-1) \otimes_k k[t_4]/(t_4^2-1)\\
&\cong M_2(k)^2 \otimes_k k^2 \cong M_2(k)^4
\end{align*}
by (1).

(7) Since $t_4$ commutes with $t_1, t_2, t_3$ in $C_{\rm \rnum 7}$, we have 
\begin{align*}
C_{\rm \rnum 7} &\cong k\ang{t_1, t_2, t_3}/(t_1t_2+t_2t_1, t_1t_3-t_3t_1, t_2t_3-t_3t_2, t_1^2-1, t_2^2-1, t_3^2-1) \otimes_k k[t_4]/(t_4^2-1)\\
&\cong M_2(k)^2 \otimes_k k^2 \cong M_2(k)^4
\end{align*}
by (2).
\end{proof}

\begin{thm} \label{thm.clm}
\begin{enumerate}
\item If $n=4$, then
\[
\renewcommand{\arraystretch}{1.2}
\begin{array}{ll}
 E\cong \P^3 \; \text{or} \; \P(1,2,4) \cup \P(1,2,3) \cup \P(3,4) &\Longleftrightarrow\uCM(A) \cong \Db(\mod k^2), \\
E=\P(3,4) \cup \P(2,4) \cup \P(2,3) \cup \P(1,4) \cup \P(1,3) \cup \P(1,2) &\Longleftrightarrow \uCM(A) \cong \Db(\mod k^8).
\end{array}
\]
\item If $n=5$, then
\[
\renewcommand{\arraystretch}{1.2}
\begin{array}{ll}
E \cong (5a), (5c), \;\text{or}\;(5d) &\Longleftrightarrow \uCM(A) \cong \Db(\mod k),\\
E \cong (5b), (5e), \;\text{or}\;(5f) &\Longleftrightarrow \uCM(A) \cong \Db(\mod k^4),\\
E=(5g) &\Longleftrightarrow \uCM(A) \cong \Db(\mod k^{16}),
\end{array}
\]
where
\begin{enumerate}
\item[(5a)] $\P^4$
\item[(5b)] $\P(1,2,3,5) \cup \P(1,2,3,4) \cup \P(4,5)$
\item[(5c)] $\P(1,2,3,4) \cup \P(3,4,5) \cup \P(1,2,5)$
\item[(5d)] $\P(3,4,5) \cup \P(1,4,5) \cup \P(1,2,5) \cup \P(1,2,3) \cup \P(2,3,4)$
\item[(5e)] $\P(1,3,5) \cup \P(1,3,4) \cup \P(1,2,5) \cup \P(1,2,4) \cup \P(4,5) \cup \P(2,3)$
\item[(5f)] $\P(1,2,5) \cup \P(1,2,4) \cup \P(1,2,3) \cup \P(4,5) \cup \P(3,5) \cup \P(3,4)$
\item[(5g)] $\P(4,5) \cup \P(3,5) \cup \P(3,4) \cup \P(2,5) \cup \P(2,4) \cup \P(2,3)\cup \P(1,5) \cup \P(1,4) \cup \P(1,3) \cup \P(1,2)$.
\end{enumerate}
\end{enumerate}
\end{thm}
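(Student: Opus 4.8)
The plan is to reduce the whole statement, via \thmref{thm.sv}, to the identification of the finite-dimensional algebra $C(A)$, and then to read off its number of simple factors. Since $M_i(k)$ is Morita equivalent to $k$, we have $\Db(\mod M_i(k)^j) \cong \Db(\mod k^j)$, so it suffices to prove in each case that $C(A) \cong M_i(k)^j$ for the appropriate $j$. I work throughout up to permutation isomorphism: a permutation isomorphism $A \cong S_\si/(f)$ is an isomorphism of algebras and $C(-)$ is built functorially from $A$, so it induces an isomorphism $C(A) \cong C(S_\si/(f))$, and I may freely replace $S$ by any member of its permutation class. By \lemref{lem.clp1} (for $n=4$) and \lemref{lem.clp2} (for $n=5$), each $S$ is then one of the normal forms (4\rnum1)--(4\rnum4) or (5\rnum1)--(5\rnum{11}), for which both the signs $\ep_{ni}\ep_{ij}\ep_{jn}$ and the point scheme (among (4a)--(4c), (5a)--(5g)) are recorded.

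For each chosen normal form I compute $C(A)$ from \lemref{lem.c}(3): the generators $t_1, \dots, t_{n-1}$ satisfy $t_i^2 = 1$ and $t_it_j = -\ep_{ni}\ep_{ij}\ep_{jn}\,t_jt_i$, so the recorded signs translate directly into the defining relations, a value $+1$ making $t_i,t_j$ anticommute and a value $-1$ making them commute. The one subtlety is that, within a given point-scheme class, some normal forms produce a sign pattern matching one of the algebras $C_{\rm I}, \dots, C_{\rm VII}$ of \lemref{lem.clc} (or the Clifford algebra $C_+$, resp. the group algebra $C_-$, of \propref{prop.pc}) verbatim, while others do not; since all members of a class have isomorphic $C(A)$, I simply select the representative whose relations coincide with a listed algebra and discard its partners.

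This yields, in the forward direction, the following dictionary. For $n=4$: (4\rnum1) gives $C(A) \cong C_+$ on three generators, hence $M_2(k)^2$ by (\ref{eq.ci}); (4\rnum3) gives $C(A) \cong C_{\rm II} \cong M_2(k)^2$; and (4\rnum4) gives $C(A) \cong C_- \cong k^{8}$ by \thmref{thm.gst}(3). For $n=5$: (5\rnum1) gives $C_+ \cong M_4(k)$; (5\rnum5) gives $C_{\rm III} \cong M_2(k)^4$; (5\rnum8) gives $C_{\rm IV} \cong M_4(k)$; (5\rnum6) gives $C_{\rm V} \cong M_4(k)$; (5\rnum9) gives $C_{\rm VI} \cong M_2(k)^4$; (5\rnum{10}) gives $C_{\rm VII} \cong M_2(k)^4$; and (5\rnum{11}) gives $C_- \cong k^{16}$. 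Matching these to the point-scheme classes of \lemref{lem.clp1} and \lemref{lem.clp2} and applying \thmref{thm.sv} with Morita invariance produces exactly the asserted equivalences $\uCM(A) \cong \Db(\mod k^a)$ with $a \in \{2,8\}$ for $n=4$ and $a \in \{1,4,16\}$ for $n=5$.

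The reverse implications are then formal. By \lemref{lem.clp1}/\lemref{lem.clp2} the point scheme $E$ is isomorphic to exactly one type on the list, and the forward dictionary attaches to each type a single category $\Db(\mod k^a)$; as the categories occurring are pairwise non-equivalent --- $K_0(\Db(\mod k^a)) \cong \Z^a$ and the rank of the Grothendieck group is a derived invariant --- knowing $\uCM(A)$ determines $a$, hence the point-scheme class up to the stated grouping. The only genuine labor is therefore the sign bookkeeping of the second step together with the choice of a matching representative; the substantive algebraic fact, that the twisted group algebras $C_{\rm I}, \dots, C_{\rm VII}$ are full matrix algebras, has already been established in \lemref{lem.clc}, so I expect no serious obstacle beyond careful case-tracking.
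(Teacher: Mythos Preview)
Your proposal is correct and follows essentially the same route as the paper: reduce by permutation isomorphism to the normal forms of \lemref{lem.clp1}/\lemref{lem.clp2}, read off the signs $\ep_{ni}\ep_{ij}\ep_{jn}$, plug them into \lemref{lem.c}(3) to identify $C(A)$ with one of $C_+$, $C_-$, or $C_{\rm I},\dots,C_{\rm VII}$, and conclude via \thmref{thm.sv} and \lemref{lem.clc}. Your explicit treatment of the reverse implications (via the rank of $K_0$) is a small elaboration the paper leaves implicit, and your closing phrase ``full matrix algebras'' is slightly loose --- several $C_{\rm \bullet}$ are products $M_i(k)^j$ rather than simple --- but your earlier dictionary has this right, so the slip is purely verbal.
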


\begin{proof}
(1) By \lemref{lem.clp1}, there exists a graded ($\pm 1$)-skew polynomial algebra $S'$ such that $A \cong S'/(f)$ and the point scheme $E'$ of $S'$ is $\P^3,  \P(1,2,4) \cup \P(1,2,3) \cup \P(3,4)$, or $\bigcup_{1\leq i<j\leq 4} \P(i,j)$. (Note that $E \cong E'$.)
By \thmref{thm.gst} (2), (3), we only consider the case $E'=\P(1,2,4) \cup \P(1,2,3) \cup \P(3,4)$.
In this case, 
\[\ep_{41}\ep_{12}\ep_{24}=1,\; \ep_{41}\ep_{13}\ep_{34}=-1,\; \ep_{42}\ep_{23}\ep_{34}=-1\]
(see (4\rnum{3}) in the proof of \lemref{lem.clp1}), so $C(S'/(f))$ is isomorphic to
\[ k\ang{t_1, t_2, t_3}/(t_1t_2+t_2t_1, t_1t_3-t_3t_1, t_2t_3-t_3t_2, t_i^2-1) \cong M_2(k)^2\]
by \lemref{lem.clc} (2). Thus we have $\uCM(A) \cong \uCM(S'/(f)) \cong \Db(\mod k^2)$ by \thmref{thm.sv}.

(2) By \lemref{lem.clp2}, there exists a graded ($\pm 1$)-skew polynomial algebra $S'$ such that $A \cong S'/(f)$
and the point scheme $E'$ of $S'$ is $\text{(5a)}, \dots, \text{(5f)}$, or (5g).
By \thmref{thm.gst} (1), (3), we only consider the cases (5b) to (5f).

If $E$ is (5b), then
\[\ep_{51}\ep_{12}\ep_{25}=1,\; \ep_{51}\ep_{13}\ep_{35}=1,\; \ep_{51}\ep_{14}\ep_{45}=-1,\;
  \ep_{52}\ep_{23}\ep_{35}=1,\; \ep_{52}\ep_{24}\ep_{45}=-1,\; \ep_{53}\ep_{34}\ep_{45}=-1, \]
(see (5\rnum{5}) in the proof of \lemref{lem.clp2}), so $C(S'/(f))$ is isomorphic to
\[ k\ang{t_1, t_2, t_3, t_4}/(t_1t_2+t_2t_1, t_1t_3+t_3t_1, t_1t_4-t_4t_1, t_2t_3+t_3t_2, t_2t_4-t_4t_2, t_3t_4-t_4t_3, t_i^2-1) \cong M_2(k)^4\]
by \lemref{lem.clc} (3). Thus we have $\uCM(A) \cong \uCM(S'/(f)) \cong \Db(\mod k^4)$ by \thmref{thm.sv}.

If $E$ is (5c), then 
\[\ep_{51}\ep_{12}\ep_{25}=1,\; \ep_{51}\ep_{13}\ep_{35}=-1,\; \ep_{51}\ep_{14}\ep_{45}=-1,\;
  \ep_{52}\ep_{23}\ep_{35}=-1,\; \ep_{52}\ep_{24}\ep_{45}=-1,\; \ep_{53}\ep_{34}\ep_{45}=1, \]
(see (5\rnum{8}) in the proof of \lemref{lem.clp2}), so $C(S'/(f))$ is isomorphic to
\[ k\ang{t_1, t_2, t_3, t_4}/(t_1t_2+t_2t_1, t_1t_3-t_3t_1, t_1t_4-t_4t_1, t_2t_3-t_3t_2, t_2t_4-t_4t_2, t_3t_4+t_4t_3, t_i^2-1) \cong M_4(k)\]
by \lemref{lem.clc} (4). Thus we have $\uCM(A) \cong \uCM(S'/(f)) \cong \Db(\mod k)$ by \thmref{thm.sv}.

If $E$ is (5d), then 
\[\ep_{51}\ep_{12}\ep_{25}=1,\; \ep_{51}\ep_{13}\ep_{35}=-1,\; \ep_{51}\ep_{14}\ep_{45}=1,\;
  \ep_{52}\ep_{23}\ep_{35}=-1,\; \ep_{52}\ep_{24}\ep_{45}=-1,\; \ep_{53}\ep_{34}\ep_{45}=1, \]
(see (5\rnum{6}) in the proof of \lemref{lem.clp2}), so $C(S'/(f))$ is isomorphic to
\[ k\ang{t_1, t_2, t_3, t_4}/(t_1t_2+t_2t_1, t_1t_3-t_3t_1, t_1t_4+t_4t_1, t_2t_3-t_3t_2, t_2t_4-t_4t_2, t_3t_4+t_4t_3, t_i^2-1) \cong M_4(k)\]
by \lemref{lem.clc} (5). Thus we have $\uCM(A) \cong \uCM(S'/(f)) \cong \Db(\mod k)$ by \thmref{thm.sv}.

If $E$ is (5e), then
\[\ep_{51}\ep_{12}\ep_{25}=1,\; \ep_{51}\ep_{13}\ep_{35}=1,\; \ep_{51}\ep_{14}\ep_{45}=-1,\;
  \ep_{52}\ep_{23}\ep_{35}=-1,\; \ep_{52}\ep_{24}\ep_{45}=-1,\; \ep_{53}\ep_{34}\ep_{45}=-1, \]
(see (5\rnum{9}) in the proof of \lemref{lem.clp2}), so $C(S'/(f))$ is isomorphic to
\[ k\ang{t_1, t_2, t_3, t_4}/(t_1t_2+t_2t_1, t_1t_3+t_3t_1, t_1t_4-t_4t_1, t_2t_3-t_3t_2, t_2t_4-t_4t_2, t_3t_4-t_4t_3, t_i^2-1) \cong M_2(k)^4\]
by \lemref{lem.clc} (6). Thus we have $\uCM(A) \cong \uCM(S'/(f)) \cong \Db(\mod k^4)$ by \thmref{thm.sv}.

If $E$ is (5f), then 
\[\ep_{51}\ep_{12}\ep_{25}=1,\; \ep_{51}\ep_{13}\ep_{35}=-1,\; \ep_{51}\ep_{14}\ep_{45}=-1,\;
  \ep_{52}\ep_{23}\ep_{35}=-1,\; \ep_{52}\ep_{24}\ep_{45}=-1,\; \ep_{53}\ep_{34}\ep_{45}=-1, \]
(see (5\rnum{10}) in the proof of \lemref{lem.clp2}), so $C(S'/(f))$ is isomorphic to
\[ k\ang{t_1, t_2, t_3, t_4}/(t_1t_2+t_2t_1, t_1t_3-t_3t_1, t_1t_4-t_4t_1, t_2t_3-t_3t_2, t_2t_4-t_4t_2, t_3t_4-t_4t_3, t_i^2-1) \cong M_2(k)^4\]
by \lemref{lem.clc} (7). Thus we have $\uCM(A) \cong \uCM(S'/(f)) \cong \Db(\mod k^4)$ by \thmref{thm.sv}.
\end{proof}

Let $\ell$ denote the number of irreducible components of $E$ which are isomorphic to $\P^1$, that is, the the number of irreducible components of the form $\P(i,j)$.
\corref{cor.gst} and \thmref{thm.clm} imply the following result which states that \conjref{conj.main} is true for $n\leq 5$.

\begin{thm}\label{thm.main}
Assume that $n\leq 5$.
\begin{enumerate}
\item If $n$ is odd, then $\ell\leq 10$ and
\begin{align*}
\ell =0 &\Longleftrightarrow \uCM(A) \cong \Db(\mod k),\\
0< \ell \leq 3 &\Longleftrightarrow \uCM(A) \cong \Db(\mod k^4),\\
3< \ell \leq 10 &\Longleftrightarrow \uCM(A) \cong \Db(\mod k^{16}).
\end{align*}
\item If $n$ is even, then $\ell\leq 6$ and
\begin{align*}
0\leq \ell \leq 1 &\Longleftrightarrow \uCM(A) \cong \Db(\mod k^2),\\
1< \ell \leq 6 &\Longleftrightarrow \uCM(A) \cong \Db(\mod k^8).
\end{align*}
\end{enumerate}
\end{thm}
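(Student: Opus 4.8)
The plan is to reduce \thmref{thm.main} entirely to the structural results already obtained, namely \corref{cor.gst} for $n\le 3$ and \thmref{thm.clm} for $n=4,5$, by carrying out a bookkeeping of the invariant $\ell$ across every case in the classification. The key observation is that $\ell$ is simply the number of irreducible components of $E$ of the form $\P(i,j)$, and this number is preserved under any isomorphism of schemes; in particular it is unchanged by the permutation isomorphisms of \lemref{lem.clp1} and \lemref{lem.clp2}. Since every $S$ is, up to such an isomorphism, one of the finitely many explicitly listed algebras, and since the structure of $\uCM(A)$ for each has already been pinned down, it suffices to read off $\ell$ from each listed point scheme and check that the resulting value lands in the interval predicted by the theorem.

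For $n$ odd I would run through $n\in\{1,3,5\}$. The cases giving $\uCM(A)\cong\Db(\mod k)$ are $n=1$, $E=\P^2$ when $n=3$, and (5a), (5c), (5d) when $n=5$; in each of these $E$ has no component of the form $\P(i,j)$, so $\ell=0$. The cases giving $\Db(\mod k^4)$ are $E=\P(2,3)\cup\P(1,3)\cup\P(1,2)$ when $n=3$ (so $\ell=3$) and (5b), (5e), (5f) when $n=5$ (with $\ell=1,2,3$ respectively). Finally (5g) gives $\Db(\mod k^{16})$ and consists of ten lines, so $\ell=10$. This exhausts the occurring values $\ell\in\{0,1,2,3,10\}$ and matches them with the intervals $\ell=0$, $0<\ell\le 3$, $3<\ell\le 10$; in particular $\ell\le 10$.

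For $n$ even I would run through $n\in\{2,4\}$. Here $\uCM(A)\cong\Db(\mod k^2)$ occurs for $n=2$ (where $E=\P^1$, so $\ell=1$) and for $E\cong\P^3$ (so $\ell=0$) or $E\cong\P(1,2,4)\cup\P(1,2,3)\cup\P(3,4)$ (so $\ell=1$) when $n=4$, while $\Db(\mod k^8)$ occurs only for the last point scheme in \thmref{thm.clm}(1), which is a union of six lines, so $\ell=6$. Thus the occurring values are $\ell\in\{0,1,6\}$, matching the intervals $0\le\ell\le 1$ and $1<\ell\le 6$, and $\ell\le 6$.

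It remains to argue the $\Leftarrow$ implications and to confirm that the interval description is the correct one. Both follow from the fact that the categories $\Db(\mod k)$, $\Db(\mod k^2)$, $\Db(\mod k^4)$, $\Db(\mod k^8)$, $\Db(\mod k^{16})$ are pairwise inequivalent (they are distinguished by the rank of $K_0$), so the assignment sending the $\ell$-interval of $E$ to the equivalence class of $\uCM(A)$ is a bijection on the cases that actually occur, and may be inverted. I expect the only real subtlety---rather than a true obstacle---to be checking consistency: distinct point-scheme geometries can share the same $\ell$-interval yet must yield equivalent $\uCM(A)$. This is visible already for (5b), (5e), (5f), which have $\ell=1,2,3$ but all give $\Db(\mod k^4)$; that these coincide is precisely the content of \thmref{thm.clm}, and it is what forces the classification to be phrased in terms of intervals of $\ell$ rather than the exact value of $\ell$.
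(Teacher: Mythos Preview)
Your proposal is correct and follows exactly the route the paper takes: the paper's entire proof is the single sentence ``\corref{cor.gst} and \thmref{thm.clm} imply the following result,'' and your write-up simply unpacks that implication by tabulating $\ell$ case by case. Your bookkeeping of $\ell$ across all listed point schemes is accurate, and your remark that the categories $\Db(\mod k^m)$ are distinguished by the rank of $K_0$ (together with the exhaustiveness of the classification) cleanly handles the $\Longleftarrow$ direction that the paper leaves implicit.
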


At the end of paper, we collect some examples when $n=6$ as further evidence for \conjref{conj.main}.

\begin{ex}
\begin{enumerate}
\item Let $S = k\ang{x_1, \dots, x_6}/(x_ix_j-\ep_{ij}x_jx_i)$ with
\begin{align*}
&\ep_{12}=1, \;\ep_{13}=-1, \;\ep_{14}=1, \;\ep_{15}=-1, \;\ep_{16}=1, \;\ep_{23}=-1, \;\ep_{24}=-1, \;\ep_{25}=-1, \;\ep_{26}=1,\\
&\ep_{34}=1, \;\ep_{35}=-1, \;\ep_{36}=1, \;\ep_{45}=-1, \;\ep_{46}=1, \;\ep_{56}=1.
\end{align*}
Then the point scheme of $S$ is $\P(3,4,5) \cup \P(2,3,4) \cup \P(1,4,5) \cup \P(1,2,5) \cup \P(1,2,3) \cup \P(3,4,6) \cup \P(1,4,6) \cup \P(1,2,6) \cup \P(5,6)$,
so $\ell =1$. On the other hand, one can check that $C(A) \cong M_4(k)^2$, so we have $\uCM(A) \cong \Db(\mod k^2)$.

\item Let $S = k\ang{x_1, \dots, x_6}/(x_ix_j-\ep_{ij}x_jx_i)$ with
\begin{align*}
&\ep_{12}=1, \;\ep_{13}=-1, \;\ep_{14}=-1, \;\ep_{15}=-1, \;\ep_{16}=1, \;\ep_{23}=1, \;\ep_{24}=-1, \;\ep_{25}=-1, \;\ep_{26}=1,\\
&\ep_{34}=-1, \;\ep_{35}=-1, \;\ep_{36}=1, \;\ep_{45}=1, \;\ep_{46}=1, \;\ep_{56}=1.
\end{align*}
Then the point scheme of $S$ is $\P(2,3,4,5) \cup \P(1,2,4,5) \cup \P(2,3,6) \cup \P(1,2,6) \cup \P(4,5,6) \cup \P(1,3)$,
so $\ell =1$. On the other hand, one can check that $C(A) \cong M_4(k)^2$, so we have $\uCM(A) \cong \Db(\mod k^2)$.

\item Let $S = k\ang{x_1, \dots, x_6}/(x_ix_j-\ep_{ij}x_jx_i)$ with
\begin{align*}
&\ep_{12}=1, \;\ep_{13}=-1, \;\ep_{14}=-1, \;\ep_{15}=-1, \;\ep_{16}=1, \;\ep_{23}=1, \;\ep_{24}=-1, \;\ep_{25}=-1, \;\ep_{26}=1,\\
&\ep_{34}=-1, \;\ep_{35}=-1, \;\ep_{36}=1, \;\ep_{45}=-1, \;\ep_{46}=1, \;\ep_{56}=1.
\end{align*}
Then the point scheme of $S$ is $\P(2,3,5) \cup \P(2,3,4) \cup \P(1,2,5) \cup \P(1,2,4) \cup \P(1,2,6) \cup \P(2,3,6)\cup \P(4,5) \cup \P(1,3) \cup \P(4,6)\cup \P(5,6)$,
so $\ell =4$. On the other hand, one can check that $C(A) \cong M_2(k)^8$, so we have $\uCM(A) \cong \Db(\mod k^8)$.

\item Let $S = k\ang{x_1, \dots, x_6}/(x_ix_j-\ep_{ij}x_jx_i)$ with
\begin{align*}
&\ep_{12}=1, \;\ep_{13}=-1, \;\ep_{14}=-1, \;\ep_{15}=-1, \;\ep_{16}=1, \;\ep_{23}=-1, \;\ep_{24}=-1, \;\ep_{25}=-1, \;\ep_{26}=1,\\
&\ep_{34}=-1, \;\ep_{35}=-1, \;\ep_{36}=1, \;\ep_{45}=-1, \;\ep_{46}=1, \;\ep_{56}=1.
\end{align*}
Then the point scheme of $S$ is $\P(1,2,5) \cup \P(1,2,4) \cup \P(1,2,3) \cup \P(1,2,6) \cup \P(4,5) \cup \P(3,5) \cup \P(3,4) \cup \P(4,6) \cup \P(3,6)\cup \P(5,6)$,
so $\ell =6$. On the other hand, one can check that $C(A) \cong M_2(k)^8$, so we have $\uCM(A) \cong \Db(\mod k^8)$.
\end{enumerate}
\end{ex}

\section*{Acknowledgment} 
The author thanks the referee for a careful reading of the manuscript and helpful comments.

\end{document}